\documentclass[12pt,a4paper]{amsart}

\usepackage{amsmath}
\usepackage{amssymb}
\usepackage{amsthm}

\newtheorem{theorem}{Theorem}[section]
\newtheorem{proposition}[theorem]{Proposition}
\newtheorem{lemma}[theorem]{Lemma}
\newtheorem{corollary}[theorem]{Corollary}

\theoremstyle{definition}
\newtheorem{definition}[theorem]{Definition}
\theoremstyle{remark}

\newcommand{\Fraisse}{Fra\"{\i}ss\'e}
\newcommand{\AgeK}{\mathcal{K}}
\newcommand{\Lang}{\mathcal{L}}
\newcommand{\Cohen}[1][]{\mathbb{C}_{#1}}
\newcommand{\CantorFiniteSeq}[1][]{\mathbb{C}[#1]}
\newcommand{\Cantor}[1][]{2^{#1}}
\newcommand{\PowersetFin}{\mathcal{P}_{<\omega}}
\newcommand{\PowersetOmega}{\mathcal{P}_{\omega}}
\newcommand{\RO}{\operatorname{RO}}
\newcommand{\Sym}{\operatorname{Sym}}
\newcommand{\forcingisom}{\simeq}
\newcommand{\rest}{\mathord{\upharpoonright}}
\newcommand{\st}{\mathrel{:}}
\newcommand{\defn}[1]{\emph{#1}}
\newcommand{\w}{\omega}
\newcommand{\mbb}[1]{\mathbb{#1}}
\newcommand{\bP}{\mathbb{P}}
\newcommand{\Generic}{G}
\newcommand{\cA}{\mathcal{A}}
\newcommand{\cB}{\mathcal{B}}
\newcommand{\cE}{\mathcal{E}}
\newcommand{\cP}{\mathcal{P}}
\newcommand{\cQ}{\mathcal{Q}}
\newcommand{\cR}{\mathcal{R}}

\newcommand{\cref}[1]{\ref{#1}}

\title{Cohen Forcing with  Fra\"iss\'e Classes Beyond $\aleph_1$}
\author{Mohammad Golshani}
\address{Mohammad Golshani, School of Mathematics, Institute for Research in Fundamental Sciences (IPM), P.O.\ Box:
		19395--5746, Tehran, Iran.}
	\email{golshani.m@gmail.com}

\thanks{The  author's research has been supported by a grant from IPM (No. 1405030417). He thanks Mostafa Mirabi and Nate Ackerman for reading the early draft of the paper and useful comments.}

\subjclass[2010]{Primary: 03C55; Secondary: 03C25, 03E35, 06E05}
\keywords{Fraisse\ limit, Cohen forcing, generic structures, strong amalgamation, disjoint amalgamation, Boolean algebras}

\begin{document}
\maketitle

\begin{abstract}
Kostana \cite{Kostana} asked for natural conditions on a class $\AgeK$ to ensure that forcing with finite members of $\AgeK$ on an underlying set $X$ is forcing equivalent to Cohen forcing.
We show that
if $\AgeK_{\triangle}$ is the class of finite triangle-free graphs and $|X|\geq\aleph_2$, then
$
\Cohen[X](\AgeK_{\triangle})\not\forcingisom\CantorFiniteSeq[X].
$
We then use a characterization of Cohen algebras due to Milovich to obtain a positive  result. We show that if $\aleph_n\leq|X|$ and $\AgeK$ has $(n+1)$-disjoint amalgamation, then $\Cohen[X](\AgeK)$ is Cohen.
\end{abstract}


\maketitle

\section{Introduction}
Forcing with a Fra\"iss\'e class was introduced in a short note \cite{gol}. It was then studied extensively in \cite{Kostana}, and later generalized in \cite{golmir} to allow function symbols in the language.
Let $\AgeK$ be a class of finite structures in a relational language and let $X$ be
an infinite set.  Let
\[
 \Cohen[X](\AgeK)
  =\{A\in\AgeK: A\subseteq X\},
\]
ordered by reverse inclusion.  When $\AgeK$ is a Fra\"iss\'e class, the generic
filter produces a structure on $X$ whose finite approximations lie in $\AgeK$.
Kostana  asked whether there are natural conditions on $\AgeK$ which ensure that this forcing is equivalent to the Cohen forcing \cite[Question~1]{Kostana}.

A  partial answer was obtained in \cite{AFGMP}, where it was shown that if $\AgeK$ is a non-trivial strong Fra\"iss\'e class, then
$
\Cohen[\w_1](\AgeK)\forcingisom\CantorFiniteSeq[\w_1],
$
for every such $\AgeK$ with at most $\aleph_1$ isomorphism types.
The first purpose of this note is to show that the bound $\aleph_1$ is exact.

\begin{theorem}
\label{Triangle-free forcing is not Cohen}
Suppose $X$ is a set with $|X|\geq\w_2$, and let $\AgeK_{\triangle}$ denote the class of finite triangle-free graphs.
Then
\[
\Cohen[X](\AgeK_{\triangle})\not\forcingisom\CantorFiniteSeq[X].
\]
In particular, $\Cohen[X](\AgeK_{\triangle})$ is not Cohen.
\end{theorem}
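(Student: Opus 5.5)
Since $\Cohen[X](\AgeK_{\triangle})$ is a ccc poset of size $|X|$, being Cohen would mean its completion is the Cohen algebra $\RO(\CantorFiniteSeq[X])$. To rule this out, I will find a Boolean-algebraic property that every Cohen algebra has and show that this forcing fails it. The property I intend to use comes from the Šapirovskiĭ/Ščepin theory of openly generated (``$\kappa$-Cohen'', tightly $\sigma$-filtered) algebras. In $\RO(\CantorFiniteSeq[X])$, every subset $S$ of $X$ gives a regular subalgebra, namely the one generated by the coordinates in $S$. These subalgebras form a club of regular subalgebras closed under unions of chains, and any two of them commute: the projection of an element supported on $S$ onto the subalgebra for $T$ is supported on $S\cap T$. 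On the triangle-free side, the natural candidates are the suborders $\Cohen[Y](\AgeK_{\triangle})$ for $Y\subseteq X$. The strong amalgamation of triangle-free graphs (free amalgamation) makes each of these a complete suborder, with the projection of a condition $A$ being $A\rest Y$.

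Step one is to find a configuration showing these suborders do not commute in the required way. Take distinct points $a,b,c$. Let $Y_1=\{a,b\}\cup Z$ and $Y_2=\{a,c\}\cup Z$, or use a related arrangement. Consider the condition ``edges $ab$ and $ac$''. In the intersection, the compatibility of $bc$-information depends on $a$, since $bc$ cannot be an edge once $ab$ and $ac$ are. This is the typical non-disjoint 3-amalgamation failure: pairwise compatible edge data on three points need not amalgamate. Triangle-freeness therefore gives a failure of 3-amalgamation, and that fits the positive result stated for $\aleph_n$ with $n+1$.

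Step two turns this local failure into a global obstruction at $\aleph_2$, following Milovich's style of characterizing Cohen algebras. I would use the known fact that a Cohen algebra of size $\aleph_2$ is the union of an increasing continuous $\omega_2$-chain of Cohen subalgebras of size $\aleph_1$. I also need a commutativity condition: for elementary submodels $M\subseteq N$ together with another model $M'$ with $M\cap M'$ suitably placed, the projections commute. Working with a continuous chain of elementary submodels $M_\alpha\prec H(\theta)$ of size $\aleph_1$ containing an isomorphism $\pi$ from a dense subset of $\RO(\CantorFiniteSeq[X])$, the club of $\alpha$ with $\pi$-images of $M_\alpha$ corresponding to coordinate sets would force the triangle-free algebra to admit a commuting, skeleton-like family of regular subalgebras indexed by a $\{\omega,\omega_1\}$-type lattice of sets.

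Step three is to exhibit two countable or $\aleph_1$ models whose induced subalgebras $\RO(\Cohen[X\cap M](\AgeK_{\triangle}))$ fail to commute. The witness is a triangle $a\in M\cap M'$, $b\in M\setminus M'$, $c\in M'\setminus M$ with a third model containing $b$ and $c$, so that the three pairwise-amalgamable data $ab$, $ac$, $bc$ cannot be amalgamated. I expect the main obstacle to be exactly this transfer. The isomorphism need not send coordinate subalgebras to the subalgebras generated by $X\cap M$, so I will need an elementarity argument to show that on a club of models the two notions coincide, and also to show that commutativity of projections is preserved by the isomorphism. Once that is done, the triangle configuration contradicts commutativity, and the failure at $|X|=\aleph_2$ extends to larger $X$ because $\Cohen[X](\AgeK_{\triangle})$ restricted to $\omega_2$ many points is a complete suborder and a factor.
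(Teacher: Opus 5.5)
\textbf{There is a genuine gap: the property you plan to contradict actually holds.} Your combinatorial witness is the right one: a triangle with one vertex in the intersection of the two supports and one vertex in each difference. But you aim to show that the induced subalgebras $\RO(\Cohen[X\cap M](\AgeK_{\triangle}))$ fail to commute, and they do commute. Your own observation that the projection of a condition $A$ onto $\Cohen[Y](\AgeK_{\triangle})$ is $A\rest Y$ already gives this. Projections preserve joins, so projecting $\bigvee_i\iota(A_i)$ onto the $Y_1$-part and then the $Y_2$-part, in either order, yields $\bigvee_i\iota(A_i\rest(Y_1\cap Y_2))$. This is exactly the behaviour of coordinate subalgebras of $\RO(\CantorFiniteSeq[X])$.

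The three-model failure of amalgamation is not contradictory either. For the Cohen generic $s\colon X\to 2$, the elements $[s_0=s_1]$, $[s_1=s_2]$, $[s_0\neq s_2]$ lie in three coordinate subalgebras, are pairwise compatible, and have meet $0$. Moreover, the complete subalgebra generated by the two parts already contains the edge, since in the completion $\iota(G_{a,b})=\bigwedge_{c\in Y_0\cap Y_1}-(\iota(G_{a,c})\wedge\iota(G_{b,c}))$. So an argument phrased through complete subalgebras and their projections cannot see the obstruction.

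What fails is regularity of the finitary join. Suppose $Y_0,Y_1$ are countable with $Y_0\cap Y_1$ infinite, $a\in Y_0\setminus Y_1$ and $b\in Y_1\setminus Y_0$. Take any nonzero $d$ in the algebra generated by $\mbb{B}_{Y_0}\cup\mbb{B}_{Y_1}$, and pick $c\in Y_0\cap Y_1$ outside the finite supports of $d$. Then $d\wedge\iota(G_{a,c})\wedge\iota(G_{b,c})$ is a nonzero refinement inside that algebra, and it is disjoint from $\iota(G_{a,b})$. Hence a maximal antichain of the generated algebra is not maximal in $\mbb{B}$.

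\textbf{The remaining steps also lack their key ingredients.} The fact about continuous $\w_2$-chains of Cohen subalgebras of density $\aleph_1$ cannot separate the two forcings. For $X=\w_2$, ccc gives that $\RO(\Cohen[X](\AgeK_{\triangle}))$ is the union of the complete subalgebras $\RO(\Cohen[\alpha](\AgeK_{\triangle}))$ for $\w_1\le\alpha<\w_2$. Each of these is Cohen by the $\aleph_1$ theorem of \cite{AFGMP}.

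What you need instead is a characterization like Theorem~\ref{Regular subalgebra definition of being Cohen}: a Cohen algebra carries a club of countable regular subalgebras closed under taking the subalgebra generated by two members. This also removes the transfer problem you identify as the main obstacle. For countable $M\prec H(\theta)$ one has $\mbb{B}\cap M=\mbb{B}_{X\cap M}$, so intersecting clubs gives a club $D\subseteq[X]^{\le\w}$ with $\mbb{B}_Y$ in the characterizing club for every $Y\in D$. No isomorphism ever needs to be tracked.

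The hypothesis $|X|\ge\aleph_2$ then enters in one precise place. Any such $D$ contains $Y_0,Y_1$ with infinite intersection and neither contained in the other. When $|X|=\aleph_1$ this can fail, since a club can be a chain of countable ordinals.

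Finally, your reduction from arbitrary $X$ to $|X|=\aleph_2$ via a complete suborder is not justified as stated, because a complete subalgebra of a Cohen algebra need not be Cohen. It is also unnecessary, since the argument above works directly for every $|X|\ge\aleph_2$.
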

 Motivated  by the notion of $n$-disjoint
amalgamation from \cite[Definition~8.5]{AFGMP}, and using Milovich's  characterization of Cohen algebras, see Theorem \ref{Milovich characterization of Cohen algebras},
 we prove the following positive result.
\begin{theorem}\label{thm:higher-dap}
Let $\AgeK$ be a non-trivial strong Fra\"iss\'e class in a finite relational language,
with countably many isomorphism types, and let $X$ be infinite.  Assume that the class $\AgeK$ has $(n+1)$-disjoint amalgamation,  for every
finite $n\geq1$ with
$
   \aleph_n\leq |X|,
$
 Then
$
\Cohen[X](\AgeK)\forcingisom\CantorFiniteSeq[X].
$
\end{theorem}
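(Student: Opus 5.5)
The plan is to apply Milovich's characterization of Cohen algebras (Theorem \ref{Milovich characterization of Cohen algebras}). In the form I expect to use, a complete Boolean algebra $B$ of weight $\kappa$, with $\kappa\le\aleph_\omega$ or more generally under suitable hypotheses, is isomorphic to the Cohen algebra $\RO(\CantorFiniteSeq[\kappa])$ under three conditions. $B$ must be homogeneous in weight, every nonzero element must have weight $\kappa$, and $B$ must carry a family of countably generated (regular) subalgebras, indexed by the finite, or more generally the countable, subsets of a set of size $\kappa$, forming a ``commuting'' or ``skeleton''-type system. For that system one checks a higher-dimensional independence condition, namely that suitable diagrams of subalgebras commute or are free amalgams. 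The natural candidate is the family $B_A=\RO(\Cohen[A](\AgeK))$ for countable $A\subseteq X$, embedded in $\RO(\Cohen[X](\AgeK))$ via the restriction map $p\mapsto p\rest A$. So the first step is to record that, since $\AgeK$ has strong amalgamation (indeed disjoint amalgamation), $\Cohen[A](\AgeK)$ is a complete suborder of $\Cohen[X](\AgeK)$ for every $A\subseteq X$. The projection is the restriction $p\mapsto p\cap A$, and a reduction of $p$ to $A$ is $p\rest A$: any extension $q$ of $p\rest A$ inside $A$ is compatible with $p$ by freely amalgamating $q$ and $p$ over $p\rest A$. For countable infinite $A$ the forcing $\Cohen[A](\AgeK)$ is countable and atomless, by non-triviality, so it is Cohen. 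This gives countably generated pieces.

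Second, I verify the combinatorial heart: the commutation conditions. For subsets $A,C\subseteq X$ one wants $B_A$ and $B_C$ to commute over $B_{A\cap C}$. This means that if $p\in\Cohen[A]$ and $q\in\Cohen[C]$ have compatible restrictions to $A\cap C$, then they are jointly compatible. That is exactly $2$-disjoint (strong) amalgamation applied to $p\rest(A\cap C)$ and $p\rest(A\cap C)\cup q\rest(A\cap C)$. Milovich's characterization at weight $\aleph_n$ requires this for $(n+1)$-element families $A_0,\dots,A_n$. Given conditions $p_i$ on $A_i$ whose restrictions to all the proper intersections cohere, one needs a common extension on $\bigcup A_i$. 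Here one applies $(n+1)$-disjoint amalgamation from \cite[Definition~8.5]{AFGMP}. The finite structure on $\bigcup_i\operatorname{dom}(p_i)$ is built from the compatible family of structures on the sets $\bigcup_{j\ne i}$ of lower dimension, and only relations meeting all $n+1$ parts remain to be added. Because the language is finite relational and $\AgeK$ is closed under substructures, this is precisely what the $(n+1)$-disjoint amalgamation property provides. The lower-dimensional amalgams come by induction on $k\le n+1$ using $k$-disjoint amalgamation, which I expect follows from the hypothesis plus strong amalgamation, or else is part of the definition.

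Third, I use an elementary-submodel or continuous-chain decomposition to reduce to $|X|=\aleph_n$. For $|X|\ge\aleph_\omega$ I would use the hypothesis for all $n$ together with Milovich's theorem in its general form, or with a direct induction on $|X|$ along a filtration by elementary submodels of size $<|X|$. At successor steps the $(n+1)$-commutation gives that the quotient is Cohen, and at limits the union is dense. Homogeneity of weight is immediate, since below any condition $p$ the forcing is isomorphic to $\Cohen[X\setminus\operatorname{dom}p]$ with extra structure, and again has weight $|X|$.

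The main obstacle is matching the exact hypothesis of Milovich's theorem to the disjoint amalgamation statement. One must check carefully that the ``coherent'' family of lower-dimensional amalgams on unions of $n$ of the sets is what is needed, and in particular that compatibility in the Boolean algebras of the relevant joins reduces to the existence of a single finite structure. I would first try to prove a lemma that, for any finite family of subsets, the subalgebra generated by $\{B_{A_i}\}$ is dense in $B_{\bigcup A_i}$, and that meets correspond to restrictions to intersections. I would then verify Milovich's condition by an induction on $n$ that mirrors his proof for $\aleph_n$.
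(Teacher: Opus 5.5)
There is a genuine gap, and it sits where you locate the main obstacle. Theorem~\ref{Milovich characterization of Cohen algebras} asks for a club $C\subseteq[\mbb{B}]^{\le\w}$ of \emph{countable} subalgebras such that, whenever $\w_\tau\le\pi(\mbb{B})$, the subalgebra generated by any $\tau$ members of $C$ is \emph{regular} in $\mbb{B}$.

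\begin{itemize}
\item Your proposal never states or proves this regularity. It substitutes an unspecified ``commuting/free amalgam'' condition.
\item Your pieces $\RO(\Cohen[A](\AgeK))$ have size $2^{\aleph_0}$, so they are not in $[\mbb{B}]^{\le\w}$. The right pieces are the countable algebras $\mbb{B}_Y$ generated by $\iota^{\prime\prime}[\Cohen[Y](\AgeK)]$ inside the algebra $\mbb{B}$ generated by $\iota^{\prime\prime}[\Cohen[X](\AgeK)]$. They form a club because $M\cap\mbb{B}=\mbb{B}_{M\cap X}$.
\item The bridging lemma you propose is false: the subalgebra generated by the $B_{A_i}$ need not be dense in $B_{\bigcup_i A_i}$. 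Take $\AgeK$ to be all finite graphs, which has every $n$-DAP, and $a\in A_0\setminus A_1$, $b\in A_1\setminus A_0$. Every nonzero element of the join lies above some nonzero $\iota(\cQ_0)\wedge\iota(\cQ_1)$, with $\cQ_i$ on a finite subset of $A_i$. That meet is compatible with making $\{a,b\}$ a non-edge. So no nonzero element of the join lies below $\iota(G_{a,b})$.
\end{itemize}

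The missing idea is a reduction argument that generalizes your Step~1, which is exactly the case $\tau=1$.

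\begin{itemize}
\item Given $\cP\in\Cohen[X](\AgeK)$ and $Y_0,\dots,Y_{\tau-1}$, let $r=\bigwedge_{i<\tau}\iota(\cP\rest[P\cap Y_i])$. This lies in the join $\mbb{A}$ and above $\iota(\cP)$.
\item Suppose $0<d\le r$ with $d\in\mbb{A}$. Pick finite $F_i\subseteq Y_i$ carrying the generators of $d$, and $\cQ$ with $\iota(\cQ)\le d$ and $\bigcup_i F_i\subseteq Q$.
\item Then $0<\bigwedge_i\iota(\cQ\rest[F_i])\le d$. The $\tau$ structures $\cQ\rest[F_i]$, together with $\cP$, are pairwise coherent; coherence with $\cP$ holds because $d\le r$.
\item So $(\tau+1)$-DAP gives a common extension, and $d$ is compatible with $\iota(\cP)$. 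Hence $r$ is a reduction and $\mbb{A}$ is regular.
\end{itemize}

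The extra structure is the condition $\cP$ being reduced, whose domain need not lie in any $Y_i$. This is why $(n+1)$-DAP goes with joins of $n$ club members when $\aleph_n\le|X|$. Your claim that Milovich's criterion at weight $\aleph_n$ concerns $(n+1)$-element families misreads it. Your $(n+1)$-set compatibility statement is exactly what the argument consumes, but only if one of the sets is the domain $P$ rather than another club member.

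Your Step~3 is unnecessary. Milovich's criterion places no bound on $\pi(\mbb{B})$, and for $|X|\ge\aleph_\w$ the hypothesis supplies every DAP. The alternative filtration induction, with quotients claimed Cohen at successor stages, is unsupported and would be essentially as hard as the theorem itself.
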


\section{Preliminaries}
\label{Section: Background for Cohen counterexample}

We use the notation of \cite{AFGMP}.
For an infinite set $X$, let Cohen forcing over $X$ be defined as
\[
\CantorFiniteSeq[X]
=
\bigl\{p\st (\exists X_0\in\PowersetFin(X))\ p\in\Cantor[X_0]\bigr\}.
\]
The order is reverse inclusion.
A partial order $\bP$ is called \defn{Cohen} when $
\RO(\bP)\cong\RO(\CantorFiniteSeq[X])
$, for some infinite set $X$,
We will use the following characterization of Cohen algebras.

\begin{theorem}[\hbox{\cite[Theorem~30.10]{Jech}}]
\label{Regular subalgebra definition of being Cohen}
Suppose $\mbb{B}$ is an infinite Boolean algebra of uniform density.
Then $\mbb{B}$ is Cohen if and only if the set
\[
\{\mbb{A}\subseteq\mbb{B}\st |A|\leq\w\text{ and }\mbb{A}\text{ is a regular subalgebra of }\mbb{B}\}
\]
has a club subset $C$ such that whenever $\mbb{A}_0,\mbb{A}_1\in C$, if $\mbb{A}_2$ is the Boolean algebra generated by $A_0\cup A_1$, then $\mbb{A}_2\in C$.
Further, if the uniform density of $\mbb{B}$ is $\kappa$, then $\mbb{B}$ is forcing isomorphic to $\CantorFiniteSeq[\kappa]$.
\end{theorem}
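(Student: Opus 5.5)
The statement has two directions, and I would treat them quite differently: the forward direction is a direct verification, while the converse needs an induction on the density of $\mbb{B}$.

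\emph{Forward direction.} I may assume $\mbb{B}=\RO(\CantorFiniteSeq[\kappa])$. For a countable subalgebra $\mbb{D}\subseteq\mbb{B}$, let $A_{\mbb{D}}\subseteq\kappa$ be the union of the domains of the conditions in $\mbb{D}\cap\CantorFiniteSeq[\kappa]$. I would let $C$ consist of those countable $\mbb{D}$ satisfying
\[
\CantorFiniteSeq[A_{\mbb{D}}]\subseteq\mbb{D}\subseteq\RO(\CantorFiniteSeq[A_{\mbb{D}}]).
\]
\begin{enumerate}
\item \emph{Regularity.} For $A\subseteq\kappa$, the order $\CantorFiniteSeq[A]$ is a complete suborder of $\CantorFiniteSeq[\kappa]$: a maximal antichain of $\CantorFiniteSeq[A]$ stays maximal, by restricting conditions to $A$. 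Since $\CantorFiniteSeq[A_{\mbb{D}}]$ is dense in $\mbb{D}$, every $\mbb{D}\in C$ is a regular subalgebra of $\mbb{B}$.
\item \emph{Closure under increasing unions.} This is immediate, because $A_{\bigcup_n\mbb{D}_n}=\bigcup_n A_{\mbb{D}_n}$.
\item \emph{Unboundedness.} By the c.c.c., every element of $\mbb{B}$ is the join of a countable antichain of conditions, so it has a countable support. Starting from a countable set, I would alternately add the supports and close under the Boolean operations, $\omega$ times.
\item \emph{Joins.} For $\mbb{D}_0,\mbb{D}_1\in C$, the algebra generated by $\mbb{D}_0\cup\mbb{D}_1$ contains $\CantorFiniteSeq[A_{\mbb{D}_0}\cup A_{\mbb{D}_1}]$. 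It is also contained in $\RO(\CantorFiniteSeq[A_{\mbb{D}_0}\cup A_{\mbb{D}_1}])$, since this complete subalgebra contains both $\mbb{D}_0$ and $\mbb{D}_1$. So the join lies in $C$.
\end{enumerate}

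\emph{Converse.} I would argue by induction on the uniform density $\kappa$ of $\mbb{B}$.
\begin{itemize}
\item \emph{Base case $\kappa=\omega$.} $\mbb{B}$ has a countable dense atomless subalgebra, which is unique up to isomorphism, so $\mbb{B}\cong\RO(\CantorFiniteSeq[\omega])$.
\item \emph{Chain of submodels.} For $\kappa>\omega$, take a continuous $\in$-chain $\langle N_\alpha:\alpha<\mathrm{cf}\,\kappa\rangle$ of elementary submodels of some $H(\theta)$. Each $N_\alpha$ should contain $\mbb{B}$ and $C$, have $|N_\alpha|<\kappa$, and the chain should cover a dense subset of $\mbb{B}$.
\item \emph{The subalgebras $\mbb{B}_\alpha$.} Let $\mbb{B}_\alpha$ be the complete subalgebra generated by $N_\alpha\cap\mbb{B}$. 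By unboundedness and closure of $C$, $N_\alpha\cap\mbb{B}$ is a directed union of members of $C$. Using the join property, I would show that $N_\alpha\cap\mbb{B}$ is itself regular in $\mbb{B}$.
\item \emph{Inductive hypothesis.} $\mbb{B}_\alpha$ inherits a club of the same kind, namely the members of $C$ lying in $N_\alpha$. By induction, $\mbb{B}_\alpha$ is Cohen of its density.
\end{itemize}

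\emph{Main obstacle: successor steps.} The key point is that each successor extension $\mbb{B}_\alpha\le\mbb{B}_{\alpha+1}$ is, in the extension by $\mbb{B}_\alpha$, itself Cohen. This is where the hypothesis that $C$ is closed under pairwise joins is really used. It should yield a relative independence property: for $\mbb{D}\in C$, the algebras $\mbb{D}$ and $N_\alpha\cap\mbb{B}$ are independent over $\mbb{D}\cap N_\alpha$, in the sense that each element of $\mbb{D}$ has a least upper bound in $N_\alpha\cap\mbb{D}$, computed as its projection. With this property, I would build $\mbb{B}_{\alpha+1}$ from $\mbb{B}_\alpha$ by adjoining countable pieces. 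Each piece is atomless over the previous one, by uniform density, and hence adds a Cohen real over it by the countable case applied to quotients.

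\emph{Limit steps.} Continuity of the chain, together with the c.c.c., gives that $\mbb{B}$ is the completion of the direct limit. An iteration with finite supports of Cohen forcings is Cohen, so $\mbb{B}\cong\RO(\CantorFiniteSeq[\kappa])$. The identification with $\CantorFiniteSeq[\kappa]$ uses uniform density, which guarantees that each quotient has density exactly the number of reals added.
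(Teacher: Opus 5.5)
The paper gives no proof of this statement; it quotes it from Jech. So your proposal has to stand on its own, and its converse direction has a genuine gap.

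\textbf{Forward direction.} This is fine for complete $\mbb{B}$. The paper applies the theorem to the non-complete algebra generated by the range of $\iota$. There the club must consist of subalgebras of $\mbb{B}$ itself, so your reduction to $\RO(\CantorFiniteSeq[\kappa])$ needs a transfer step; that step is routine.

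\textbf{Converse: the successor step fails.} The gap is exactly at the step you call the main obstacle. The ``relative independence property'' is asserted rather than derived. As you state it (the projection of $d\in D$ lands in $N_\alpha\cap D$), it is false, even for Cohen algebras with a join-closed club:
\begin{enumerate}
\item Take $\mbb{B}=\RO(\CantorFiniteSeq[\kappa])$ with your own club $C$, and let $S=N_\alpha\cap\kappa$. Then the completion of $N_\alpha\cap\mbb{B}$ is $\RO(\CantorFiniteSeq[S])$.
\item Choose a countable $A$ with $A\cap S$ infinite and some $t\in A\setminus S$. Choose a regular open $U\in\RO(\CantorFiniteSeq[A\cap S])$ that is not clopen.
\item Put $d=U\wedge\{(t,0)\}$ and $D=\langle\CantorFiniteSeq[A]\cup\{d\}\rangle$. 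Then $D\in C$, and the projection of $d$ to $\RO(\CantorFiniteSeq[S])$ is $U$.
\item Every element of $D$ has the form $(d\wedge x)\vee(\neg d\wedge y)$ with $x,y$ clopen. If such an element does not depend on coordinate $t$, it equals the clopen set obtained from $y$ by setting $t\mapsto 1$. Hence $U\notin D$.
\end{enumerate}
So this property cannot follow from the hypothesis.

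\textbf{The surrounding scaffolding.} It does not carry the argument either.
\begin{itemize}
\item $\mbb{B}_\alpha$ is not shown to have uniform density, so the inductive hypothesis does not apply as stated.
\item Even if $\mbb{B}_\alpha$ and $\mbb{B}_{\alpha+1}$ are both Cohen, the quotient $\mbb{B}_{\alpha+1}:\mbb{B}_\alpha$ need not be.
\item A countable piece need not be atomless over what precedes it.
\item Most importantly, you never show that the intermediate algebras ($\mbb{B}_\alpha$ plus some pieces) are regular in $\mbb{B}$. Without that there is no quotient to iterate.
\item The c.c.c.\ you use must first be derived from the hypothesis.
\end{itemize}

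\textbf{The missing idea.} Join-closure gives \emph{regularity}, not independence, and that already suffices.
\begin{enumerate}
\item The club alone gives c.c.c. For a maximal antichain $W$, build $D\in C$ with $W\cap D$ maximal in $D$. Regularity of $D$ then forces $W\subseteq D$, so $W$ is countable.
\item For any $F\subseteq C$, a maximal antichain of $\langle\bigcup F\rangle$ is countable. Hence it lies in $\bigcup_n\langle D_0\cup\dots\cup D_n\rangle$ for some $D_i\in F$. This union is in $C$, by pairwise joins and closure under $\omega$-unions. So it is regular, and the antichain is maximal in $\mbb{B}$. Thus every subalgebra generated by members of $C$ is regular.
\item For uncountable $\kappa$, build a continuous chain $E_\xi=\langle\bigcup_{\eta<\xi}D_\eta\rangle$, $\xi<\kappa$, with $D_\eta\in C$ exhausting a dense set of size $\kappa$.
\item In $V^{E_\xi}$ the quotient $E_{\xi+1}/\dot G$ is countable; it is $\{[d]:d\in D_\xi\}$. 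Since $|E_\xi|<\kappa$, uniform density makes $\mbb{B}/\dot G$ atomless.
\item Add splitting witnesses $\omega$ times to choose $D_\xi$ so that $E_{\xi+1}/\dot G$ is atomless, hence Cohen.
\item Coherent isomorphisms $\RO(E_\xi)\cong\RO(\CantorFiniteSeq[\xi\times\omega])$ then give $\RO(\mbb{B})\cong\RO(\CantorFiniteSeq[\kappa])$.
\end{enumerate}
This route needs no elementary chains and no induction on density.
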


Let $\AgeK$ be a strong \Fraisse\ class.
The following two results will be used later.

\begin{proposition}[\hbox{\cite[Proposition~4.6]{AFGMP}}]
\label{All Fraisse classes give the same universe}
Let $\AgeK_0$ and $\AgeK_1$ be non-trivial strong \Fraisse\ classes with only countably many isomorphism types.
If $\Generic_0$ is generic for $\Cohen[\w](\AgeK_0)$, then $V[\Generic_0]$ contains a filter which is generic for $\Cohen[\w](\AgeK_1)$.
\end{proposition}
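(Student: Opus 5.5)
The plan is to show that for every non-trivial strong \Fraisse\ class $\AgeK$ with countably many isomorphism types, the poset $\Cohen[\w](\AgeK)$ is forcing equivalent to $\CantorFiniteSeq[\w]$. Applying this to $\AgeK_0$ and to $\AgeK_1$ gives $\RO(\Cohen[\w](\AgeK_0))\cong\RO(\CantorFiniteSeq[\w])\cong\RO(\Cohen[\w](\AgeK_1))$. Any isomorphism of these complete Boolean algebras transports the generic ultrafilter obtained from $\Generic_0$ to a generic ultrafilter on $\RO(\Cohen[\w](\AgeK_1))$. Its trace on $\Cohen[\w](\AgeK_1)$, viewed as a dense subset of the algebra, is then a generic filter in $V[\Generic_0]$.

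To identify $\Cohen[\w](\AgeK)$ with Cohen forcing I will use the standard fact that any countable atomless poset has the Cohen algebra as its regular open algebra. Here atomless means that every condition has two incompatible extensions. The reason is that $\RO$ of such a poset is a complete atomless Boolean algebra with a countable dense subset, and the countable atomless Boolean algebra is unique, so its completion is unique too.

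Countability is immediate. A condition is a finite subset $A\subseteq\w$ together with an $\AgeK$-structure on it. For each finite $A$ there are only countably many choices, since there are countably many isomorphism types and only finitely many bijections between finite sets. So the whole poset is countable.

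The main step is atomlessness, and this is where non-triviality and strong amalgamation are used. I take non-triviality to supply two structures $C,C'\in\AgeK$ on the same finite universe $D$ which are distinct as structures, i.e.\ some relation holds of a tuple in one and not in the other. Given a condition $A$, choose an injection of $D$ into $\w\setminus A$ with image $D'$.

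Since the language is relational, the empty structure is a common substructure of $A$ and of $C$, and it lies in $\AgeK$ by hereditariness. Strong amalgamation over the empty structure then gives $B\in\AgeK$ with universe $A\cup D'$, extending $A$ and with $B\rest D'$ a copy of $C$. The same construction applied to $C'$ gives $B'$ on the same universe $A\cup D'$ with $B'\rest D'$ a copy of $C'$.

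Both $B$ and $B'$ extend $A$, and they disagree on a relation among points of $D'$. Hence no condition can extend both, so $B$ and $B'$ are incompatible.

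The only delicate point I expect is the exact form of non-triviality used in \cite{AFGMP}. If it is formulated differently, for example as the existence of two non-isomorphic one-point extensions of some structure, the same amalgamation argument should still go through with the appropriate base in place of the empty structure, using strong amalgamation to keep the new points disjoint from $A$.
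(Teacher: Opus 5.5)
The paper gives no proof of this proposition. It is quoted from \cite{AFGMP} as a black box, so there is no internal argument to compare yours against.

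Your argument is correct and self-contained. It actually proves the stronger statement that $\Cohen[\w](\AgeK)\forcingisom\CantorFiniteSeq[\w]$ for every non-trivial strong \Fraisse\ class $\AgeK$ with countably many isomorphism types. The required generic for $\Cohen[\w](\AgeK_1)$ then comes, as you say, from pulling back along $\iota$ the image of the generic ultrafilter under an isomorphism of the two regular open algebras; that isomorphism lies in $V$, so genericity is preserved.

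Two steps deserve one more line each.
\begin{itemize}
\item For the uniqueness step, apply it to the Boolean subalgebra of $\RO(\Cohen[\w](\AgeK))$ generated by the image of the poset. That subalgebra is countable, dense and atomless, so $\RO$ is its completion.
\item For the splitting step, rename the strong amalgam so that $A$ is fixed pointwise and $C$ (resp.\ $C'$) lands on $D'$ via the single fixed injection. Then restrict to $A\cup D'$ by hereditariness. This guarantees that $B\rest D'$ and $B'\rest D'$ are the two distinct transported structures.
\end{itemize}
Amalgamating over the empty structure matches the conventions here. The paper identifies strong amalgamation with $2$-DAP, and its definition of $2$-DAP allows the two universes to be disjoint.

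You need not worry about the precise definition of non-triviality. Suppose every finite set carried at most one $\AgeK$-structure. Then any two conditions would agree on the intersection of their universes, and strong amalgamation would make them compatible. The forcing would then be trivial, and the proposition would fail with such a class as $\AgeK_0$ and a non-trivial $\AgeK_1$. So any notion of non-triviality under which the statement is true supplies your two distinct structures on a common finite universe.
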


\begin{lemma}[\hbox{\cite[Lemma~4.7]{AFGMP}}]
\label{Uniform density of Cohen structure forcing}
Suppose $X$ is a set and $\AgeK$ is a strong \Fraisse\ class with at most $|X|$-many isomorphism classes.
Then $\RO(\Cohen[X](\AgeK))$ has uniform density $|X|$.
\end{lemma}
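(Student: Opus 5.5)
We plan to prove the two inequalities separately: $\RO(\Cohen[X](\AgeK))$ has a dense subset of size at most $|X|$, and below every nonzero element every dense subset has size at least $|X|$. Together these give uniform density exactly $|X|$. Throughout, $\bP=\Cohen[X](\AgeK)$. Since $\bP$ is separative (two structures are compatible exactly when some structure in $\AgeK$ extends both), we identify $\bP$ with a dense subset of $\RO(\bP)$.

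\emph{Upper bound.} It suffices to show $|\bP|\le|X|$. There are $|X|$ finite subsets $A\subseteq X$. A structure in $\AgeK$ with universe $A$ is determined by its isomorphism type together with a bijection from a fixed representative of that type onto $A$. There are at most $|X|$ types, and for each type only finitely many such bijections. Hence at most $|X|$ structures live on each finite $A$, and so $|\bP|\le|X|\cdot|X|=|X|$. Since $\bP$ is dense in $\RO(\bP)$, the density is at most $|X|$.

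\emph{Lower bound below an arbitrary condition.} Fix a nonzero $b\in\RO(\bP)$ and pick $p\in\bP$ with $p\le b$. Suppose $D$ is dense below $b$. For each $d\in D$ below $p$ choose $r_d\in\bP$ with $r_d\le d$. Then $\{r_d\}$ is a dense subset of $\bP$ below $p$ of size at most $|D|$, so we may assume $D\subseteq\bP$ and $D$ is dense below $p$. Suppose toward a contradiction that $|D|<|X|$. Then $U=\bigcup_{d\in D}\operatorname{dom}(d)$ is a union of fewer than $|X|$ finite sets, so $|U|<|X|$; this holds also when $X$ is countable, where $D$ and hence $U$ are finite. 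Choose $x\in X\setminus(U\cup\operatorname{dom}(p))$.

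The key step, and the only place the hypotheses on $\AgeK$ enter, is to find $q\in\bP$ with $q\le p$ and $x\in\operatorname{dom}(q)$. We will get this as follows.
\begin{itemize}
\item Since $\AgeK$ is non-trivial, it contains structures of arbitrarily large finite size, in particular one of size $|p|+1$.
\item By strong amalgamation (or the joint embedding property plus hereditariness), $p$ embeds into such a structure with exactly one extra point.
\item Renaming that extra point as $x$ gives the required $q\in\AgeK$ on $\operatorname{dom}(p)\cup\{x\}$.
\end{itemize}
Once $q$ is in hand, density gives some $d\in D$ with $d\le q$, that is $d\supseteq q$. Then $x\in\operatorname{dom}(d)\subseteq U$, contradicting the choice of $x$. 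Hence every dense set below $b$ has size at least $|X|$, and the density is uniformly $|X|$.
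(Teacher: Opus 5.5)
Your upper bound is fine, but the lower bound has a genuine gap. The step ``so we may assume $D\subseteq\bP$ and $D$ is dense below $p$'', later used as ``$d\le q$, that is $d\supseteq q$'', requires $\bP$ to be separative, and it is not. Your parenthetical remark only restates what compatibility means. Density of $\{\iota(r_d)\}$ below $\iota(p)$ in $\RO(\bP)$ gives, for $q\le p$, only some $r_d$ with $\iota(r_d)\le\iota(q)$. That means every extension of $r_d$ is compatible with $q$; it does not give $r_d\supseteq q$.

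For triangle-free graphs, the one-point condition $\{x\}$ is compatible with every condition, so $\iota(\{x\})=1$ although $\emptyset\not\supseteq\{x\}$. This is exactly your key step with $p=\emptyset$: the one-point extension $q=\{x\}$ carries no information about $x$. Worse, the class of all finite sets in the empty language has strong amalgamation and arbitrarily large members, so every sentence of your argument applies to it. Yet there any two conditions are compatible, $\RO(\bP)=\{0,1\}$, and the conclusion fails. So the non-triviality hypothesis (which the lemma tacitly needs) must enter differently. ``Arbitrarily large members'' is automatic from strong amalgamation once $\AgeK$ has a nonempty structure, and producing some $q\le p$ containing $x$ is the easy part.

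What you actually need is a $q\le p$ with $x\in\operatorname{dom}(q)$ that is not implied in $\RO(\bP)$ by any condition omitting $x$. Use non-triviality in this form: two distinct members of $\AgeK$ live on a common finite set. Restricting along an enumeration of that set to the first stage where they differ yields $C\in\AgeK$ with two distinct one-point extensions $E_0\neq E_1$ on $C\cup\{s\}$. Copy $E_0$ onto $F\cup\{x\}$, with $F\subseteq X$ fresh and $s\mapsto x$. Let $q$ be a strong amalgam of $p$ with this copy on $\operatorname{dom}(p)\cup F\cup\{x\}$. Let $q'$ be a strong amalgam of $q\rest(\operatorname{dom}(q)\setminus\{x\})$ with the copy of $E_1$ over $F$. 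Then $q'\neq q$, but the two agree off $x$.

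Now suppose $\iota(r)\le\iota(q)$ and $x\notin\operatorname{dom}(r)$. Being compatible with $q$, $r$ agrees with $q'$ on $\operatorname{dom}(r)\cap\operatorname{dom}(q)$. Strong amalgamation then gives $r'\le r,q'$, hence $r'\perp q$, contradicting $\iota(r)\le\iota(q)$. So any $r_d$ with $\iota(r_d)\le\iota(q)$ has $x\in\operatorname{dom}(r_d)\subseteq U$, which is the contradiction you wanted. With this replacement for your key step, your counting argument goes through.
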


\section{Cohen Forcing with Strong \Fraisse\ Classes Beyond $\w_1$: a negative answer}
\label{Section: Cohen beyond omega1}

Let $\AgeK_{\triangle}$ be the class of finite undirected triangle-free graphs. It is a
 non-trivial strong \Fraisse\ class.
Our  aim in this section is to prove Theorem \ref{Triangle-free forcing is not Cohen}.
Let
\[
\iota: \Cohen[X](\AgeK_{\triangle})\longrightarrow
\RO(\Cohen[X](\AgeK_{\triangle}))\setminus\{0\}
\]
be the canonical dense embedding, and let $\mbb{B}$ be the Boolean subalgebra of
$\RO(\Cohen[X](\AgeK_{\triangle}))$ generated by the range of $\iota$, so that
$
\mbb{B}\forcingisom\Cohen[X](\AgeK_{\triangle}).
$
For $Y\subseteq X$, let $\mbb{B}_Y$ be the Boolean subalgebra of $\mbb{B}$ generated by
$
\iota^{\prime\prime}[\Cohen[Y](\AgeK_{\triangle})].
$
We begin with some basic facts about the algebras $\mbb{B}_Y$.

\begin{lemma}
\label{Local triangle algebras detect their supports}
Suppose $X$ is uncountable, $Y,Z \subseteq X$ are countably infinite and
$\mbb{B}_Y\subseteq\mbb{B}_Z$.
Then $Y\subseteq Z$.
\end{lemma}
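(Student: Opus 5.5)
We argue by contraposition: assume $y\in Y\setminus Z$ and show that $\mbb{B}_Y\not\subseteq\mbb{B}_Z$. The natural witness is an element of $\mbb{B}_Y$ that records information about the point $y$, and the simplest candidate is an edge. Since $Z$ is countable and $X$ is uncountable, we may pick $x\in Y$ with $x\neq y$ (possible because $Y$ is infinite). We then take the condition $p$ consisting of the single edge $\{x,y\}$ on vertex set $\{x,y\}$. This is triangle-free, so $p\in\Cohen[Y](\AgeK_{\triangle})$ and $\iota(p)\in\mbb{B}_Y$. It suffices to show $\iota(p)\notin\mbb{B}_Z$.

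Every element of $\mbb{B}_Z$ is a finite Boolean combination of $\iota(q_1),\dots,\iota(q_k)$ with $q_i\in\Cohen[Z](\AgeK_{\triangle})$. Let $F=\bigcup_i \mathrm{dom}(q_i)\subseteq Z$, which is finite and does not contain $y$. The key claim is an \emph{independence} property: for any nonzero $b$ in the finite algebra generated by the $\iota(q_i)$, both $b\wedge\iota(p)\neq0$ and $b\wedge\neg\iota(p)\neq0$. Granting this, $\iota(p)$ cannot equal any such $b$. It also cannot be $0$ or $1$, because $\iota(p)\neq0$, and $\neg\iota(p)\neq0$ as witnessed by the non-edge condition on $\{x,y\}$. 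Hence $\iota(p)\notin\mbb{B}_Z$.

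To prove the claim, note that $b\neq0$ means there is a condition $r$ with $\iota(r)\le b$. Refining $r$ if necessary, we may assume $r$ decides each $q_i$, i.e.\ $r\le q_i$ or $r\perp q_i$. This only uses nonzero elements of the finite algebra. In fact we can take $r$ with domain inside $F$, by restricting: compatibility with a $q_i$ depends only on the induced structure on $F\cap\mathrm{dom}(r)$. We should check this for triangle-free graphs. Two conditions are compatible iff they agree on the common domain, and then their union is automatically triangle-free when the domains are amalgamated freely? This is not quite automatic, since a union can create triangles. We handle this by taking $r$ to be a maximal common refinement on $F$: choose $r'\le r$ deciding everything, and let $r=r'\rest F$. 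Then $r\le q_i$ iff $r'\le q_i$. For $r'\perp q_i$ we use strong amalgamation to see that incompatibility is witnessed on $F$. This is the step I expect to be the main obstacle, and it is where free amalgamation (with no new edges) of triangle-free graphs is used.

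Having $r$ with $\mathrm{dom}(r)\subseteq F$ and $y\notin F$, we extend $r$ in two ways:
\begin{itemize}
\item adjoin $y$ (and $x$ if $x\notin F$) with only the edge $\{x,y\}$ added;
\item adjoin the same vertices with no new edges.
\end{itemize}
A single new vertex $y$ of degree at most $1$ cannot create a triangle, so both extensions are triangle-free. The first lies below $r$ and $p$, and the second lies below $r$ and is incompatible with $p$. Each extension still decides every $q_i$ in the same way, because $q_i$ lives on $F$ and the induced graph on $F$ is unchanged. Hence both extensions lie below $b$, which gives $b\wedge\iota(p)\neq0$ and $b\wedge\neg\iota(p)\neq0$, completing the argument.
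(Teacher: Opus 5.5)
Your proof is correct, but it takes a different route from the paper's. The paper argues by symmetry. Every permutation of $X$ induces an automorphism of $\mbb{B}$, and one fixing $Z\cup\{c\}$ pointwise fixes $\mbb{B}_Z$ pointwise. Choosing such a permutation that sends $a$ to a fresh point $a'\notin Z\cup\{a,c\}$ moves $\iota(G_{a,c})$ to the distinct element $\iota(G_{a',c})$, so $\iota(G_{a,c})\notin\mbb{B}_Z$.

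You instead give a direct support argument. Every nonzero element of the algebra generated by $\iota(q_1),\dots,\iota(q_k)$ lies above $\iota(r)$ for some condition $r$ living on $F$. Since $y\notin F$, such an $r$ extends both with and without the edge $\{x,y\}$.

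What each approach buys:
\begin{itemize}
\item Your version proves more than needed: $\iota(p)$ splits every nonzero element of $\mbb{B}_Z$. It also never uses a point outside $Y\cup Z$, so the uncountability of $X$ plays no role (you mention it but do not use it).
\item The paper's version is shorter and needs no amalgamation. It only needs $\iota(G_{a,c})\neq\iota(G_{a',c})$, which holds because the graph on $\{a,a',c\}$ with the single edge $\{a,c\}$ is below one and incompatible with the other. The price is that it requires the spare point $a'$.
\end{itemize}

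Three small remarks on your write-up:
\begin{itemize}
\item Your worry that ``a union can create triangles'' is unfounded. The free amalgam (union of edge sets, no new edges) of two triangle-free graphs that agree on their overlap is always triangle-free.
\item If you first extend so that $F\subseteq\mathrm{dom}(r')$, by adding isolated vertices, then whether $r'\le q_i$ or $r'\perp q_i$ is read off from $r'\rest\mathrm{dom}(q_i)$ alone. The localization step then needs no amalgamation at all.
\item That same extension gives $\mathrm{dom}(r)=F$ exactly. As written, $\mathrm{dom}(r)$ might be a proper subset of $F$, and then you must adjoin $x$ whenever $x\notin\mathrm{dom}(r)$, not only when $x\notin F$.
\end{itemize}
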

\begin{proof}
Suppose not, and fix $a\in Y\setminus Z$.
Choose $c\in Y\setminus\{a\}$.
Let $G_{a,c}\in\AgeK_{\triangle}$ be the graph with underlying set $\{a,c\}$ and the single edge $\{a,c\}$, and let
$
G^*_{a,c}=\iota(G_{a,c}).
$
Then $G^*_{a,c}\in\mbb{B}_Y$.
Every permutation $g\in\Sym(X)$ induces an automorphism of
$\Cohen[X](\AgeK_{\triangle})$, and hence an automorphism of $\mbb{B}$, which we also denote by $g$.
Every element of $\mbb{B}_Z$ is fixed pointwise by every permutation which fixes $Z$ pointwise.
Choose $a'\in X\setminus(Z\cup\{a,c\})$ and let $g$ fix $Z\cup\{c\}$ and sends $a$ to $a'$.
Then
$
g(G^*_{a,c})=G^*_{a',c}\neq G^*_{a,c}.
$
But   $G^*_{a,c} \in \mbb{B}_Y\subseteq\mbb{B}_Z$, and
 $g$ is identity on $\mbb{B}_Z$, a contradiction.
\end{proof}
The next lemma can be proved easily.
\begin{lemma}
\label{Local triangle algebras form a club}
The collection
\[
\cE=\{\mbb{B}_Y\st Y\in [X]^\omega\}
\]
contains a club subset of $[\mbb{B}]^{\leq\w}$, where
\end{lemma}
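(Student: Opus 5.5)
We show that $\cE$ contains the club
\[
C=\{\mbb{A}\in[\mbb{B}]^{\leq\w}\st \mbb{A}=\mbb{B}_{Y_\mbb{A}}\text{ and } Y_\mbb{A} \text{ is infinite}\},
\]
where for $\mbb{A}\in[\mbb{B}]^{\leq\w}$ we put $Y_\mbb{A}=\bigcup\{\operatorname{supp}(b)\st b\in\mbb{A}\}$. Here every $b\in\mbb{B}$ is a finite Boolean combination of elements $\iota(A)$ with $A\in\Cohen[X](\AgeK_{\triangle})$. We fix one such representation for each $b$ and let $\operatorname{supp}(b)$ be the finite union of the underlying sets of the $A$'s it uses. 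Then $b\in\mbb{B}_{\operatorname{supp}(b)}$, and $Y_\mbb{A}$ is countable whenever $\mbb{A}$ is.

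\emph{Closure.} Let $\mbb{A}_0\subseteq\mbb{A}_1\subseteq\cdots$ be an increasing sequence in $C$ with union $\mbb{A}$. Then $Y_\mbb{A}=\bigcup_n Y_{\mbb{A}_n}$, which is infinite. Since $\Cohen[Y](\AgeK_{\triangle})=\bigcup_n\Cohen[Y_{\mbb{A}_n}](\AgeK_{\triangle})$ (every condition is a finite set), we get $\mbb{B}_{Y_\mbb{A}}=\bigcup_n\mbb{B}_{Y_{\mbb{A}_n}}$. An increasing union of Boolean subalgebras is the subalgebra generated by the union, so the identity above holds. Hence
\[
\mbb{B}_{Y_\mbb{A}}=\bigcup_n\mbb{B}_{Y_{\mbb{A}_n}}=\bigcup_n\mbb{A}_n=\mbb{A},
\]
and $\mbb{A}\in C$.

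\emph{Unboundedness.} Let $\mbb{A}_0\in[\mbb{B}]^{\leq\w}$. We enlarge it to a countable subalgebra containing the $\iota$-image of a countably infinite set of singletons, so that its $Y$ is infinite. Given $\mbb{A}_n$, set $\mbb{A}_{n+1}=\mbb{B}_{Y_{\mbb{A}_n}}\supseteq\mbb{A}_n$. This algebra is countable, because $\Cohen[Y](\AgeK_{\triangle})$ is countable for countable $Y$: it consists of finite graphs on finite subsets of $Y$. Let $\mbb{A}=\bigcup_n\mbb{A}_n$. Then
\[
Y_\mbb{A}=\bigcup_n Y_{\mbb{A}_n},\qquad \mbb{B}_{Y_\mbb{A}}=\bigcup_n\mbb{B}_{Y_{\mbb{A}_n}}=\bigcup_n\mbb{A}_{n+1}=\mbb{A}.
\]
The middle equality uses the union computation from the closure step. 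The first equality needs a check. Each $\mbb{A}_{n+1}=\mbb{B}_{Y_{\mbb{A}_n}}$ is generated by elements $\iota(A)$ with $A\subseteq Y_{\mbb{A}_n}$. The main obstacle is that the fixed supports of elements of $\mbb{A}_{n+1}$ might still reach outside $Y_{\mbb{A}_n}$. This does no harm: any such extra points are absorbed at stage $n+2$, and every element of $\mbb{A}$ lies in some $\mbb{A}_m$, so its support is contained in $Y_{\mbb{A}_m}\subseteq Y_\mbb{A}$.

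Thus $\mbb{A}\in C$ and $\mbb{A}\supseteq\mbb{A}_0$, so $C$ is unbounded. Since $C\subseteq\cE$, this proves Lemma~\ref{Local triangle algebras form a club}. (If one wants elements of $\cE$ to be regular subalgebras as in Theorem~\ref{Regular subalgebra definition of being Cohen}, that requires separately checking that $\Cohen[Y](\AgeK_{\triangle})$ is a complete suborder via the amalgamation property, but this is not needed for the containment claimed here.)
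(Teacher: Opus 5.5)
Your closing-off argument is sound in its main part. Closure under $\omega$-chains, and the iteration $\mathbb{A}_{n+1}=\mathbb{B}_{Y_{\mathbb{A}_n}}$, both work for any fixed choice of representations. You also correctly handle the fact that fixed supports may leak outside $Y_{\mathbb{A}_n}$.

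\textbf{The gap: the step meant to make $Y$ infinite fails.} A one-vertex graph $\{x\}$ is compatible with every condition, since you can add $x$ as an isolated vertex. So the cone below it is dense, and $\iota(\{x\})=1$ for every $x\in X$. Adding the $\iota$-images of singletons therefore adds at most the single element $1$, and its fixed representation has finite support. Concretely, take $\mathbb{A}_0=\{0,1\}$, with the fixed representations of $0$ and $1$ using only the condition $\{x_0\}$. Your construction then gives $\mathbb{A}_n=\mathbb{B}_{\{x_0\}}=\{0,1\}$ for all $n$. The limit is a finite algebra with $Y_{\mathbb{A}}=\{x_0\}$, which is not in your $C$, so unboundedness is not established as written.

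\textbf{The repair is easy.} First enlarge $\mathbb{A}_0$ to a countably infinite subset of $\mathbb{B}$; this is possible since $\mathbb{B}$ is infinite, e.g.\ add $\iota(G_{x_k,y_k})$ for infinitely many disjoint pairs. Then observe that $Y_{\mathbb{A}}$ is infinite whenever $\mathbb{A}$ is. This holds because $\mathbb{A}\subseteq\mathbb{B}_{Y_{\mathbb{A}}}$, and $\mathbb{B}_F$ is finite for finite $F$, since there are only finitely many triangle-free graphs on subsets of $F$. With this change the rest of your proof goes through.

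\textbf{Comparison with the paper.} The paper gives no proof of this lemma; it only says it ``can be proved easily''. Its Section~4 analogue, the lemma on elementary submodels, indicates the intended route. Take countable $M\prec H(\theta)$ containing the relevant parameters; then $\mathbb{B}\cap M=\mathbb{B}_{M\cap X}$. On that route, closure and unboundedness come for free from the club of such $M$. Infinitude of $M\cap X$ is automatic from $X\in M$, and no representations need to be fixed by choice. Your argument is the hand-built version of the same closing-off. Once the infinitude step is fixed, it gives a self-contained proof that avoids elementary submodels.
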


\begin{lemma}
\label{Two incomparable countable supports}
Suppose $|X|\geq\w_2$ and $D\subseteq\PowersetOmega(X)$ is a club.
Then there are $Y_0,Y_1\in D$ such that
\[
|Y_0\cap Y_1|=\w,
\qquad
Y_0\setminus Y_1\neq\emptyset,
\qquad
Y_1\setminus Y_0\neq\emptyset.
\]
\end{lemma}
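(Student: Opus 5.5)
We will find the two sets by working inside $[X]^{\le\w}$ of size at least $\w_2$, with the club $D$ giving closure under unions of countable increasing chains and unboundedness. The idea is to first produce one element $Y\in D$, then use the fact that $|X|\ge\w_2$ to find another element of $D$ that shares an infinite piece of $Y$ but also escapes it in both directions. A naive choice is to take $Y_0\in D$, pick $x\notin Y_0$, and find $Y_1\in D$ containing a countably infinite subset of $Y_0$ together with $x$ but not all of $Y_0$. Unboundedness alone only gives supersets, so the plan will instead build an $\w_1$-chain and exploit the fact that one point of $X$ cannot be captured too early.

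\textbf{Step 1.} Build a continuous increasing chain $\langle Z_\alpha\st\alpha<\w_1\rangle$ of elements of $D$, using unboundedness at successor steps and closure at limit steps. Let $W=\bigcup_{\alpha<\w_1}Z_\alpha$; then $|W|\le\w_1<|X|$. Since $Z_0$ is countable and $D\subseteq\PowersetOmega(X)$ consists of countably infinite sets (if $D$ also contains finite sets, we start the chain above a countably infinite set), each $Z_\alpha$ is infinite.

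\textbf{Step 2.} Build a second continuous chain $\langle Z'_\alpha\st\alpha<\w_1\rangle$ in $D$. Fix a point $x\in X\setminus W$, which exists since $|X|\ge\w_2$. Interleave the two chains so that at each successor step $Z'_{\alpha+1}\supseteq Z_\alpha\cup\{x\}$ and $Z_{\alpha+1}\supseteq Z'_{\alpha}$. This is legitimate, but it would force $x\in W$, so we must reorganize. The cleaner approach is a closing-off argument: let $f\colon[X]^{<\w}\to[X]^{\le\w}$ be a function such that every countably infinite set closed under $f$ lies in $D$. This is the Kueker characterization: every club in $[X]^{\le\w}$ contains all closure points of some such $f$, after shrinking.

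\textbf{Step 3.} Take $N\prec H(\theta)$ countable with $D\in N$, so that $Y_0=N\cap X\in D$. Pick $x\in X\setminus Y_0$, and let $N'$ be the Skolem hull of $N_0\cup\{x\}$, where $N_0\prec N$ is a countable elementary submodel with $N_0\cap X$ infinite and $N_0\cap X\subsetneq N\cap X$. Such an $N_0$ is obtained by first choosing $N_0\in N$ (countable, so $N_0\subseteq N$), and then choosing $N$ to contain it. Put $Y_1=N'\cap X\in D$. Then $Y_0\cap Y_1\supseteq N_0\cap X$, which is infinite, and $x\in Y_1\setminus Y_0$.

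\textbf{Main obstacle.} The hard part is showing $Y_0\setminus Y_1\ne\emptyset$, since adding $x$ might generate all of $Y_0$. To handle this, choose $x$ generically relative to $N$: $N'\cap X$ is determined by the countable set $N_0\cup\{x\}$. There are only $\w_1$-many $y\in Y_0$, and more precisely, for each $y\in N$ outside the hull of $N_0$, the set of $x$ whose hull with $N_0$ contains $y$ is definable from $N_0,y$. Since $|X|\ge\w_2$, a counting argument should give an $x$ for which this fails for a fixed such $y$. Concretely, fix $y\in (N\cap X)\setminus N_0$ and let $S_y=\{x'\st y\in\operatorname{Hull}(N_0\cup\{x'\})\}$. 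Each $x'\in S_y$ yields a finite-tuple term witnessing $y$, and by elementarity in $N$ the set $S_y$ lies in $N$. If $S_y$ were co-small, the same would hold for every $y'\notin N_0$ in a set of size at least $\w_2$. Every single $x'$ generates only countably many elements, however, so by a pigeonhole argument using $\w_2$ we can choose $x\notin S_y\cup N$. Making this counting precise is where the hypothesis $|X|\ge\w_2$ is genuinely used, and I expect it to be the most delicate step.
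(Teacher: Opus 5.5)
There is a genuine gap, and it is the step you flag yourself: nothing in the proposal proves $Y_0\setminus Y_1\neq\emptyset$. (Steps 1 and 2 and the Kueker function are never used. You also need $D\in N_0$, not merely $D\in N$, to get $Y_1=N'\cap X\in D$. And $Y_0$ is countable, not of size $\aleph_1$.)

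Put $S_y=\{x\in X\st y\in\operatorname{Hull}(N_0\cup\{x\})\}$. The counting you gesture at proves only the following. Each hull is countable and $|X|\geq\aleph_2$, so the set $Q$ of those $y$ with $X\setminus S_y$ countable is itself countable: if $\aleph_1$ many $y$ lay in $Q$, some $x$ outside the union of their exceptional sets would have all of them in its hull. This bounds the number of bad $y$'s. It does not show that a $y\in(N\cap X)\setminus N_0$ fixed in advance is good. Your claim that co-smallness of one $S_y$ propagates to $\aleph_2$ many $y'$ has no justification.

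A fixed $y$ really can be captured by many $x$. Suppose $\omega_2\subseteq X$ and $y=N_0\cap\omega_1$. Take any $x<\sup(N_0\cap\omega_2)$ with $x\notin N_0$, any $\beta\in N_0$ with $x<\beta<\omega_2$ and $\beta\geq\omega_1$, and a bijection $e\colon\omega_1\to\beta$ in $N_0$. Then the countable ordinal $e^{-1}(x)\geq y$ lies in the hull, hence so does $y$. Whether this happens for all but countably many $x$ is an end-extension (Chang's Conjecture type) question, not something counting settles.

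The missing idea is to choose $y$ after $N_0$, and outside $Q$. Since $\operatorname{Hull}(N_0\cup\{x\})=\{f(x)\st f\in N_0,\ \operatorname{dom}(f)=X\}$, the set $Q$ is definable from $N_0\in N$. Hence $Q\in N$, and being countable, $Q\subseteq N\cap X$. Moreover $Q\neq N\cap X$, because $N\cap X\notin N$. Any $y\in(N\cap X)\setminus Q$ has $X\setminus S_y$ uncountable, and any $x\in X\setminus(S_y\cup N)$ then gives $y\in Y_0\setminus Y_1$.

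The paper avoids Skolem hulls entirely by reversing your order of choices:
\begin{itemize}
\item Fix an infinite $R\in D$.
\item Pick $\aleph_1$ many $t\notin R$, each with some $Y_t\in D$ such that $R\cup\{t\}\subseteq Y_t$.
\item Choose $b$ outside $\bigcup_t Y_t$; this is where $|X|\geq\aleph_2$ enters.
\item Take $Y_b\in D$ with $R\cup\{b\}\subseteq Y_b$. Since $Y_b$ is countable, some $t\notin Y_b$.
\end{itemize}
Then $Y_t$ and $Y_b$ work, using only unboundedness of $D$. Your Step 3 fixes $Y_0$ before choosing the new point, and that is what forces the delicate hull analysis.
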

\begin{proof}
Choose an infinite $R\in D$.
Fix $T\subseteq X\setminus R$ of size $\w_1$.
For each $t\in T$, choose $Y_t\in D$ such that
$R\cup\{t\}\subseteq Y_t$.
Then
$
\left|\bigcup_{t\in T}Y_t\right|\leq\aleph_1<|X|.
$
Choose
$
b\in X\setminus\bigcup_{t\in T}Y_t,
$
and then choose $Y_b\in D$ such that $R\cup\{b\}\subseteq Y_b$.
As $Y_b$ is countable and $T$ is uncountable, there is an $a\in T\setminus Y_b$.
Let
$
Y_0=Y_a
$ and
$
Y_1=Y_b.
$
Then $R\subseteq Y_0\cap Y_1$, $a\in Y_0\setminus Y_1$, and
$b\in Y_1\setminus Y_0$.
\end{proof}

\begin{lemma}
\label{Generated triangle algebra is not regular}
Suppose $Y_0,Y_1\in [X]^\omega$ have infinite intersection, and let
$
a\in Y_0\setminus Y_1,
$
and
$
b\in Y_1\setminus Y_0.
$
Let $\mbb{A}$ be the Boolean subalgebra of $\mbb{B}$ generated by
$B_{Y_0}\cup B_{Y_1}$.
Then $\mbb{A}$ is not a regular subalgebra of $\mbb{B}$.
\end{lemma}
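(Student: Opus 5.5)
The plan is to use the standard criterion for regularity of a subalgebra: $\mbb{A}\subseteq\mbb{B}$ is regular if and only if every $p\in\mbb{B}\setminus\{0\}$ has a \emph{reduction} to $\mbb{A}$, meaning some $e\in\mbb{A}\setminus\{0\}$ such that every nonzero $e'\le e$ in $\mbb{A}$ meets $p$. We will exhibit a $p$ with no reduction. Fix the $a,b$ of the statement and put $p=\iota(G_{a,b})$, where $G_{a,b}$ is the graph on $\{a,b\}$ with the single edge $\{a,b\}$. It then suffices to show that every nonzero $e\in\mbb{A}$ has a nonzero $e'\le e$ in $\mbb{A}$ with $e'\wedge p=0$.

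\textbf{Step 1: a good condition below $e$.} Write $e$ as a Boolean combination of finitely many generators $\iota(q_1),\dots,\iota(q_k)$, where each $q_i$ is a condition in $\Cohen[Y_0](\AgeK_{\triangle})$ or in $\Cohen[Y_1](\AgeK_{\triangle})$. Choose a condition $r$ with $\iota(r)\le e$ that decides every $\iota(q_i)$, so that for each $i$ either $r\le q_i$ or $r\perp q_i$. Let $r'$ be the graph with vertex set $\operatorname{dom}(r)\cap(Y_0\cup Y_1)$ whose edges are the edges of $r$ lying inside $Y_0$ or inside $Y_1$. Thus $r'$ drops the vertices outside $Y_0\cup Y_1$ and the edges between $Y_0\setminus Y_1$ and $Y_1\setminus Y_0$; it is a subgraph of $r$, so it is triangle-free. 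For each $q_i$, say with $q_i\subseteq Y_0$, the condition $r'$ agrees with $r$ on $Y_0$, so $r\le q_i$ implies $r'\le q_i$. Moreover, any triangle in $r'\cup q_i$ is also a triangle in $r\cup q_i$, since the overlap of domains lies in $Y_0$, where $r'$ and $r$ coincide, and $r'$ only loses edges. Hence $r'\perp q_i$ exactly when $r\perp q_i$, so $r'$ decides each generator the same way as $r$, and therefore $\iota(r')\le e$.

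\textbf{Step 2: the extension.} Using $|Y_0\cap Y_1|=\omega$, pick $c\in(Y_0\cap Y_1)\setminus\operatorname{dom}(r')$. Let $s_0$ be $r'\rest Y_0$ with the vertices $a,c$ added (if absent) and the edge $\{a,c\}$ added, and let $s_1$ be $r'\rest Y_1$ with the vertices $b,c$ added and the edge $\{b,c\}$ added. Since $c$ is fresh and $a$ has no edges into $Y_1$ in $r'$, the graph $s_0$ is triangle-free, and likewise $s_1$. The two graphs agree on their common domain in $Y_0\cap Y_1$, where $c$ is isolated in both. Their union is triangle-free, because a triangle would need an edge between $Y_0\setminus Y_1$ and $Y_1\setminus Y_0$, and there are none. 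Hence $e'=\iota(s_0)\wedge\iota(s_1)$ is a nonzero element of $\mbb{A}$. It lies below $\iota(r')\le e$, because $s_0\cup s_1$ extends $r'$. Finally, any common extension of $s_0$, $s_1$ and $G_{a,b}$ contains the triangle $a,b,c$, so $e'\wedge p=0$. Hence $p$ has no reduction and $\mbb{A}$ is not regular.

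The step I expect to be the main obstacle is Step 1: showing that discarding the vertices outside $Y_0\cup Y_1$ and the cross edges preserves the decisions about all generators. This requires care about what incompatibility means in $\AgeK_{\triangle}$, namely disagreement on the common domain or a triangle in the union, together with the observation that each generator has domain inside a single $Y_j$, on which $r'$ and $r$ coincide. The rest of the argument is routine free amalgamation of triangle-free graphs.
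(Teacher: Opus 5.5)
Your strategy is the paper's. You show that $p=\iota(G_{a,b})$ has no reduction to $\mbb{A}$, i.e.\ that $\{d\in\mbb{A}\setminus\{0\}: d\wedge p=0\}$ is dense in $\mbb{A}$, using a witness that forces the edges $\{a,c\}$ and $\{b,c\}$ for a fresh $c\in Y_0\cap Y_1$. Your checks that $e'\neq 0$ and $e'\wedge p=0$ are fine.

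The gap is in how you get $e'\le e$. You claim that $e'=\iota(s_0)\wedge\iota(s_1)$ lies below $\iota(r')$ because $s_0\cup s_1$ extends $r'$. That claim is false: it conflates $\iota(s_0)\wedge\iota(s_1)$ with $\iota(s_0\cup s_1)$. A condition below both $s_0$ and $s_1$ need not be their free union. It may add an edge between some $x\in\operatorname{dom}(r')\cap(Y_0\setminus Y_1)$ and some $y\in\operatorname{dom}(r')\cap(Y_1\setminus Y_0)$. Such a condition is incompatible with $r'$, which declares $\{x,y\}$ a non-edge. For example, let $r$ be the edgeless graph on $\{x,y\}$ and $e=\iota(r\rest\{x\})$. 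Then $r'=r$, and $s_0\cup s_1$ together with the edge $\{x,y\}$ is a triangle-free condition below $e'$ that is incompatible with $r'$. So the chain $e'\le\iota(r')\le e$ breaks at its first link. The conclusion $e'\le e$ is true, but not for this reason, and building $r'$ by deleting cross edges in Step~1 is aimed at the wrong target.

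The missing idea is exactly the paper's step $0<u\le d$, with $u=\iota(\cQ_0)\wedge\iota(\cQ_1)$ and $\cQ_i=\cQ\rest F_i$: compare each $\iota(s_j)$ directly with the generators living in $Y_j$.
\begin{itemize}
\item Choose $r$ with $\iota(r)\le e$ and $\operatorname{dom}(r)\supseteq\bigcup_i\operatorname{dom}(q_i)$. This is also what actually gives you ``$r\le q_i$ or $r\perp q_i$'', since the poset is not separative (one-point graphs).
\item Since $\AgeK_{\triangle}$ has free amalgamation, two conditions are incompatible iff they disagree on their common domain.
\item Since $s_j$ restricted to $\operatorname{dom}(r)\cap Y_j$ equals $r\rest Y_j$, for each $q_i$ with domain in $Y_j$ you get $\iota(s_j)\le\iota(q_i)$ if $r\le q_i$, and $\iota(s_j)\wedge\iota(q_i)=0$ if $r\perp q_i$.
\item Hence $e'$ lies below the same atom of the finite algebra generated by the $\iota(q_i)$ as $\iota(r)$ does, and that atom is below $e$.
\end{itemize}
With this, $r'$ is unnecessary.

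The same free-amalgamation fact is needed to repair Step~1 as written. Your observation that triangles in $r'\cup q_i$ are triangles in $r\cup q_i$ only shows $r'\perp q_i\Rightarrow r\perp q_i$. Preserving a negative decision requires the converse.
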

\begin{proof}
Let $G_{a,b}$ be the two element graph with the single edge $\{a,b\}$ and put
$
G^*_{a,b}=\iota(G_{a,b}).
$
We claim that
\[
D=\{d\in \mbb{A}\setminus\{0\}\st d\wedge G^*_{a,b}=0\}
\]
is dense in $\mbb{A}\setminus\{0\}$.
Fix $d\in\mbb{A}\setminus\{0\}$ and suppose that
$d\wedge G^*_{a,b}\neq0$.
Find finite sets $F_i\subseteq Y_i$, for $i\in[2]$, such that every generator occurring in the expression for $d$ belongs to $\mbb{B}_{F_0}$ or $\mbb{B}_{F_1}$.
We may assume that $a\in F_0$ and $b\in F_1$.

As $\iota^{\prime\prime}[\Cohen[X](\AgeK_{\triangle})]$ is dense in $\mbb{B}$, there is a condition
$\cQ\in\Cohen[X](\AgeK_{\triangle})$ such that
$
\iota(\cQ)\leq d\wedge G^*_{a,b}.
$
By strengthening $\cQ$ if necessary, we may also assume
$F_0\cup F_1\subseteq Q$.
For $i\in[2]$, let
$
\cQ_i=\cQ\rest[F_i],
$
and set
$
u=\iota(\cQ_0)\wedge\iota(\cQ_1).
$
By the choice of the sets $F_0,F_1$ and the fact that $\iota(\cQ)\leq d$, we have
$0<u\leq d$.
Choose
\[
c\in(Y_0\cap Y_1)\setminus(F_0\cup F_1).
\]
Let $G_{a,c}$ and $G_{b,c}$ be the corresponding two element one-edge graphs, and put
\[
d^*=u\wedge\iota(G_{a,c})\wedge\iota(G_{b,c}).
\]
Since $\iota(G_{a,c})\in\mbb{B}_{Y_0}$ and
$\iota(G_{b,c})\in\mbb{B}_{Y_1}$, we have $d^*\in\mbb{A}$.

We claim that $d^*\neq0$.
To see this, take the union of $\cQ_0$ and $\cQ_1$, add the edges
$\{a,c\}$ and $\{b,c\}$, and add no other edges between
$F_0\setminus F_1$ and $F_1\setminus F_0$ or incident with $c$.
In particular, do not add the edge $\{a,b\}$.
The resulting finite graph is triangle-free, and gives a condition below $d^*$, so $d^*\neq0$.

On the other hand, every condition below $d^*$ forces both $\{a,c\}$ and $\{b,c\}$ to be edges.
Since all conditions are triangle-free, such a condition cannot contain the edge $\{a,b\}$.
Hence
$
d^*\wedge e_{a,b}=0.
$
This proves that $D$ is dense in $\mbb{A}\setminus\{0\}$.

Let $A_0\subseteq D$ be a maximal antichain in $\mbb{A}$.
Then $A_0$ is maximal in $\mbb{A}$, but every member of $A_0$ is incompatible with the nonzero element $G_{a,b}\in\mbb{B}$.
Therefore $A_0$ is not maximal in $\mbb{B}$, so $\mbb{A}$ is not a regular subalgebra of $\mbb{B}$.
\end{proof}
We are now redy to give the proof of Theorem \cref{Triangle-free forcing is not Cohen}.
Suppose towards a contradiction that
\[
\Cohen[X](\AgeK_{\triangle})\forcingisom\CantorFiniteSeq[X].
\]
Then $\mbb{B}$ is Cohen.
Moreover, by \cref{Uniform density of Cohen structure forcing},
$\mbb{B}$ has uniform density $|X|$.
Hence, by \cref{Regular subalgebra definition of being Cohen}, there is a club
$C\subseteq[\mbb{B}]^{\leq\w}$ such that every member of $C$ is a regular subalgebra of $\mbb{B}$ and, whenever $\mbb{A}_0,\mbb{A}_1\in C$, the Boolean subalgebra generated by
$A_0\cup A_1$ also belongs to $C$.

By \cref{Local triangle algebras form a club}, after intersecting clubs we obtain a club
$D\subseteq\PowersetOmega(X)$ such that
\[
Y\in D\quad\Longrightarrow\quad\mbb{B}_Y\in C.
\]
By \cref{Two incomparable countable supports}, choose $Y_0,Y_1\in D$ with infinite intersection and with
\[
a\in Y_0\setminus Y_1,
\qquad
b\in Y_1\setminus Y_0.
\]
Since $\mbb{B}_{Y_0},\mbb{B}_{Y_1}\in C$, the Boolean subalgebra $\mbb{A}$ generated by
$B_{Y_0}\cup B_{Y_1}$ belongs to $C$ and hence is regular in $\mbb{B}$.
This contradicts the above lemma.

\section{Cohen Forcing with Strong \Fraisse\ Classes Beyond $\w_1$: a positive answer}
\label{Section: Cohen beyond omega1-2}
In this section we prove Theorem \ref{thm:higher-dap}. We start with the definition of disjoint $n$-amalgamation property.
\begin{definition} (see \cite[Definition~8.5]{AFGMP})
\label{Higher DAP definition in Cohen section}
Let $n\geq1$.
A \Fraisse\ class $\AgeK$ has the \defn{disjoint $n$-amalgamation property}
(abbreviated $n$-DAP) if whenever
$\langle \cA_i\rangle _{i\in[n]}\subseteq\AgeK$ are such that
\[
\cA_i\rest[A_i\cap A_j]=\cA_j\rest[A_i\cap A_j]
\]
for all $i<j\in[n]$, there is some $\cB\in\AgeK$ such that
$\cA_i\subseteq\cB$ for all $i\in[n]$.
\end{definition}

Thus $2$-DAP is the same as the strong amalgamation property.
Notice that $\AgeK_{\triangle}$ has $2$-DAP but does not have $3$-DAP.
Indeed, if $a,b,c$ are distinct, the one-edge graphs on
$\{a,b\}$, $\{a,c\}$, and $\{b,c\}$ agree on all pairwise intersections, but they have no common extension in $\AgeK_{\triangle}$.


\begin{theorem}[\hbox{\cite{Milovich}}]
\label{Milovich characterization of Cohen algebras}
Suppose $\mbb{B}$ is a $\pi$-homogeneous Boolean algebra.
Then $\mbb{B}$ is Cohen if and only if there is a club
$C\subseteq[\mbb{B}]^{\leq\w}$ such that, whenever
$1\leq\tau<\w$, $\w_\tau\leq\pi(\mbb{B})$, and
$\mbb{A}_0,\dots,\mbb{A}_{\tau-1}\in C$, the Boolean subalgebra of $\mbb{B}$ generated by
$
\bigcup_{i\in[\tau]}A_i
$
is a regular subalgebra of $\mbb{B}$.
\end{theorem}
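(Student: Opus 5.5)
The plan is to prove the two directions separately. The forward direction is a support computation in $\CantorFiniteSeq[\kappa]$. The reverse direction builds $\mbb{B}$ as the union of a continuous chain of regular subalgebras, each of which adds countably many Cohen generators over its predecessor. Throughout, let $\kappa=\pi(\mbb{B})$. By $\pi$-homogeneity, $\kappa$ is also the $\pi$-weight of every relative algebra $\mbb{B}\rest b$. We fix a large regular $\theta$ and work with countable elementary submodels $M\prec H(\theta)$ containing $\mbb{B}$.

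\textbf{Forward direction.} Assume $\mbb{B}$ is Cohen, and identify $\RO(\mbb{B})$ with $\RO(\CantorFiniteSeq[\kappa])$. By ccc, every $b\in\mbb{B}$ has a countable support $s(b)\subseteq\kappa$: this is the least set $S$ such that $b$ lies in the completion of the clopen algebra of $\Cantor[S]$. For $M$ containing a fixed such identification, we have $s(b)\subseteq M\cap\kappa$ for all $b\in M\cap\mbb{B}$. Let $C$ consist of the sets $M\cap\mbb{B}$. This set contains a club in $[\mbb{B}]^{\le\w}$. Now take $M_0,\dots,M_{\tau-1}$ and let $\mbb{A}$ be the algebra generated by the union of the $M_i\cap\mbb{B}$. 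To show $\mbb{A}$ is regular, we show that every $b\in\mbb{B}$ has a projection onto $\mbb{A}$. The natural candidate comes from restricting $b$ to the coordinates $\bigcup_i M_i\cap\kappa$ in the product measure/category sense. The fact that must be checked is a \emph{finite-union closure} property of elementary submodels. It has the following shape: if $p$ is a condition supported on $\bigcup_i(M_i\cap\kappa)$, then some element of $\mbb{A}$ below it is compatible with exactly the same elements of $\mbb{B}$ restricted to those coordinates. I expect this to follow from elementarity applied one model at a time, together with the product structure $\Cantor[S\cup T]=\Cantor[S]\times\Cantor[T\setminus S]$. Note that this direction does not use the restriction $\w_\tau\le\kappa$.

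\textbf{Reverse direction.} Here I would argue by induction on $\kappa$. In the base case $\kappa=\w$, $\mbb{B}$ is countable and atomless, hence Cohen. For $\kappa\ge\w_1$, I would use a Davies-tree / long-approximation-sequence decomposition. Fix a continuous $\in$-chain $\langle N_\alpha:\alpha<\kappa\rangle$ of elementary submodels with $|N_\alpha|<\kappa$ and union covering $\mbb{B}$. Then the algebras $\mbb{B}_\alpha=N_\alpha\cap\mbb{B}$ should be shown to form a continuous chain of regular subalgebras, with each $\mbb{B}_{\alpha+1}$ countably generated over $\mbb{B}_\alpha$ and nowhere trivially so (by $\pi$-homogeneity). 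A chain of this kind gives $\RO(\mbb{B})\cong\RO(\CantorFiniteSeq[\kappa])$, either by a standard back-and-forth or by iterated Cohen extensions with finite support. The key combinatorial fact is the following. If $\kappa\le\w_n$, the models can be chosen as a Davies tree, so that each initial union $\bigcup_{\beta<\alpha}M_\beta$ of the countable pieces equals $\bigcup_{i<k}\mathcal{N}_i$. Here $k\le n$ and each $\mathcal{N}_i$ is an increasing union of countable models in the tree. Regularity of the generated algebra is preserved under increasing unions of regular subalgebras with compatible projections. It therefore suffices to know that the algebra generated by at most $n+1$ members of $C$ (the $k$ old pieces plus the new $M_\alpha$) is regular, and this is exactly what the hypothesis provides when $\w_{n}\le\kappa$. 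For $\kappa\ge\aleph_\w$, every finite $\tau$ is allowed, and the Davies decomposition into finitely many pieces still works level by level. At singular $\kappa$, one can alternatively assemble the conclusion from the regular cardinals below $\kappa$.

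\textbf{Main obstacle.} I expect the hardest step to be matching the \emph{exact} number of pieces in the Davies decomposition with the bound $\w_\tau\le\pi(\mbb{B})$. I also expect difficulty in showing that regularity of the finitely generated joins passes to the increasing unions $\mathcal{N}_i\cap\mbb{B}$ and then to $\mbb{B}_\alpha$. My first attempt would be to prove, by induction on $m\le n$, that for every subset of $\mbb{B}$ of size $\w_m$ covered by a suitably closed union of models, the generated subalgebra is regular and Cohen. The induction step would use the hypothesis with $\tau=m+1$, and I would verify it by showing that projections onto the pieces commute, as in the proof of Lemma \ref{Generated triangle algebra is not regular} read in reverse.
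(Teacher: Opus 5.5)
The paper does not prove this statement; it quotes it from Milovich. So your sketch has to be judged on its own merits.

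\textbf{Forward direction.} This is sound in outline, and the closure property you leave open is true. The mechanism, however, is density plus product independence, not elementarity applied model by model. Elementarity gives that $M_i\cap\mathbb{B}$ is dense in $\RO(2^{S_i})$, where $S_i=M_i\cap\kappa$. Now suppose $\mathbb{D}_j$ is dense in $\RO(2^{T_j})$ for $j<2$, and take a basic element $[p_0\cup p_1\cup p_2]$ with $p_0,p_1,p_2$ living on $T_0\setminus T_1$, $T_1\setminus T_0$, $T_0\cap T_1$ respectively. First choose $0<b_0\le[p_0\cup p_2]$ in $\mathbb{D}_0$. Then choose $0<b_1\le[p_1]\wedge e$ in $\mathbb{D}_1$, where $e$ is the projection of $b_0$ into $\RO(2^{T_0\cap T_1})$; note that $e$ is generally not in $M_1$. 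Independence over $\RO(2^{T_0\cap T_1})$ gives $b_0\wedge b_1\neq0$. Inducting on $\tau$ shows that the generated algebra is dense in $\RO(2^{\bigcup_i S_i})$, hence regular.

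\textbf{Reverse direction.} Here there are genuine gaps, and they sit exactly where the theorem's content lies.

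First, your count is off by one. When $\kappa=\aleph_n$, the hypothesis only gives regularity for joins of $\tau\le n$ members of $C$. You use ``$k\le n$ old pieces plus the new $M_\alpha$'', which can be $n+1$ generators. Already for $\kappa=\aleph_1$ only $\tau=1$ is available, so the old piece must be contained in $M_\alpha$ (a continuous $\in$-chain), not joined to it. In general you need an absorption argument. A countable old piece that belongs to $M_\alpha$ is a subset of $M_\alpha$. Pieces of equal uncountable size can be made to nest, since $N\in N'$ and $|N|\subseteq N'$ give $N\subseteq N'$. With this, at most $n-1$ uncountable pieces survive beside $M_\alpha$. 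Matching the decomposition to the bound $\w_\tau\le\pi(\mathbb{B})$ is the heart of Milovich's result, and you assert it with the wrong count rather than prove it.

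Second, regularity is not preserved under increasing unions. If $\{w_k\}$ is an infinite antichain with $\bigvee_k w_k\neq1$, the finite algebras generated by $w_0,\dots,w_{n-1}$ are regular. Yet $\{w_k\}$ is a maximal antichain of their union and is not maximal in $\mathbb{B}$. The correct tool is that a $\sigma$-directed union of regular subalgebras of a ccc algebra is regular; ccc follows from the $\tau=1$ case. So each big piece must be arranged as a $\sigma$-directed union of members of $C$. Your phrase ``compatible projections'' has to be replaced by this, and you never establish it.

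\textbf{Smaller problems.}
\begin{itemize}
\item $\pi(\mathbb{B})=\aleph_0$ does not make $\mathbb{B}$ countable; you should pass to a countable dense subalgebra.
\item $\kappa$ models of size $<\kappa$ cannot cover $\mathbb{B}$ when $|\mathbb{B}|>\kappa$. Also, $N_{\alpha+1}\cap\mathbb{B}$ is not countably generated over $N_\alpha\cap\mathbb{B}$. You need a length-$\kappa$ sequence of countable models covering a dense set of size $\kappa$.
\item $\pi$-homogeneity does not make each step nowhere trivial. The passage from the chain to $\RO(\mathbb{B})\cong\RO(\CantorFiniteSeq[\kappa])$ therefore needs its own argument.
\item The cases $\kappa\ge\aleph_\w$ and singular $\kappa$ are only gestured at.
\end{itemize}
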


Let $\AgeK$ be a non-trivial strong \Fraisse\ class in a finite relational language $\Lang$, and let $X$ be an infinite set.
As above, let $\mbb{B}$ be the Boolean subalgebra of $\RO(\Cohen[X](\AgeK))$ generated by
$\Cohen[X](\AgeK)$, and for $Y\subseteq X$ let $\mbb{B}_Y$ be the subalgebra generated by
$\Cohen[Y](\AgeK)$.

\begin{lemma}
\label{Higher DAP gives regular generated algebras}
Suppose $1\leq\tau<\w$ and $\AgeK$ has $(\tau+1)$-DAP.
Let $Y_0,\dots,Y_{\tau-1}\subseteq X$, and let $\mbb{A}$ be the Boolean subalgebra of
$\mbb{B}$ generated by
$
\bigcup_{i\in[\tau]}B_{Y_i}.
$
Then $\mbb{A}$ is a regular subalgebra of $\mbb{B}$.
\end{lemma}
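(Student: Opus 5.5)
We will show that $\mbb{A}$ is regular in $\mbb{B}$ in the standard way: every nonzero $b\in\mbb{B}$ has a \emph{projection} to $\mbb{A}$. That is, we produce a nonzero $a\in\mbb{A}$ such that every nonzero $a'\leq a$ in $\mbb{A}$ is compatible with $b$. This implies regularity. If $A_0$ were a maximal antichain of $\mbb{A}$ that is not maximal in $\mbb{B}$, take $b\neq0$ disjoint from every member of $A_0$. Then $a$ meets some $x\in A_0$, and $a'=a\wedge x$ is compatible with $b$, so $b\wedge x\neq 0$, a contradiction. Since $\iota''[\Cohen[X](\AgeK)]$ is dense in $\mbb{B}$, it suffices to treat $b=\iota(\cQ)$ with $\cQ\in\Cohen[X](\AgeK)$.

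The candidate projection is
\[
a=\bigwedge_{i\in[\tau]}\iota(\cQ\rest[Q\cap Y_i]),
\]
which lies in $\mbb{A}$ and is nonzero because it lies above $\iota(\cQ)$. Given a nonzero $a'\leq a$ in $\mbb{A}$, we write $a'$ as a Boolean combination of finitely many generators. These generators come from $\mbb{B}_{F_i}$ for finite $F_i\subseteq Y_i$, and we may enlarge the $F_i$ so that $Q\cap Y_i\subseteq F_i$. As in the proof of Lemma~\ref{Generated triangle algebra is not regular}, we pick a condition $\cR$ with $\iota(\cR)\leq a'$ and $\bigcup_i F_i\subseteq R$, and set $\cR_i=\cR\rest[F_i]$. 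Membership of $a'$ in the algebra generated by the $\mbb{B}_{F_i}$ should then give $0<\bigwedge_i\iota(\cR_i)\leq a'$. The justification is that whether a condition lies below a generator from $\mbb{B}_{F_i}$ depends only on its restriction to $F_i$; this needs checking, but it is the same reasoning the paper used before. Moreover $\cR_i\rest[Q\cap Y_i]=\cQ\rest[Q\cap Y_i]$, since $\iota(\cR)\leq a$.

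It remains to find a single condition extending $\cR_0,\dots,\cR_{\tau-1}$ and $\cQ$ simultaneously. That condition is then below both $a'$ and $b$, which gives compatibility. This is exactly a $(\tau+1)$-fold disjoint amalgamation of the structures $\cR_0,\dots,\cR_{\tau-1},\cQ$ on the underlying sets $F_0,\dots,F_{\tau-1},Q$. We check the pairwise agreement hypotheses of Definition~\ref{Higher DAP definition in Cohen section}:
\begin{itemize}
\item[(i)] $\cR_i$ and $\cR_j$ agree on $F_i\cap F_j$, because both are restrictions of $\cR$;
\item[(ii)] $\cR_i$ and $\cQ$ agree on $F_i\cap Q=Q\cap Y_i$, by the previous paragraph.
\end{itemize}
The $(\tau+1)$-DAP then yields $\cB\in\AgeK$ containing all of them. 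Relabelling, $\cB$ lives on the finite set $\bigcup_iF_i\cup Q\subseteq X$, so $\cB\in\Cohen[X](\AgeK)$ and $\iota(\cB)\leq a'\wedge b$.

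The main obstacle is the reduction step. We must show rigorously that an arbitrary nonzero element of the generated algebra contains a nonzero meet $\bigwedge_i\iota(\cR_i)$ with $\cR_i$ living on $F_i$. The concern is that Boolean combinations involve complements, and $\iota(\cR)\leq a'$ must transfer to $\bigwedge_i\iota(\cR_i)\leq a'$. I would handle this by showing that for each generator $g\in\mbb{B}_{F_i}$, a condition $\cS$ with $F_i\subseteq S$ lies below $g$ or below $-g$ according to $\cS\rest[F_i]$ alone. This uses the fact that any extension of $\cS\rest[F_i]$ can be amalgamated with another extension of $\cS\rest[F_i]$ by strong amalgamation. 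The hypothesis $\tau\geq1$ ensures strong amalgamation is available; if it is not already implied by the DAP assumption, the standing hypothesis that $\AgeK$ is a strong \Fraisse\ class supplies it.
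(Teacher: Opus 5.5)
Your proposal is correct and is essentially the paper's proof. It uses the same candidate reduction $\bigwedge_{i}\iota(\cQ\rest[Q\cap Y_i])$, the same passage to $\bigwedge_i\iota(\cR_i)\le a'$ via restrictions to the finite supports $F_i$, and the same single application of $(\tau+1)$-DAP to $\cR_0,\dots,\cR_{\tau-1},\cQ$. The step you flag is easier than you suggest and needs no strong amalgamation: if a generator $\iota(\mathcal{S})$ has $S\subseteq F_i$, then $\cR_i$, whose universe is $F_i$, either extends $\mathcal{S}$ or is incompatible with it, according to $\cR\rest[S]$ alone, so $\bigwedge_i\iota(\cR_i)$ decides every generator occurring in $a'$ exactly as $\iota(\cR)$ does and therefore lies below $a'$.
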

\begin{proof}
It is enough to show that every element in the dense image of
$\Cohen[X](\AgeK)$ has a reduction to $\mbb{A}$.
Fix $\cP\in\Cohen[X](\AgeK)$.
For $i\in[\tau]$, let
$
\cP_i=\cP\rest[P\cap Y_i],
$
and put
$
r=\bigwedge_{i\in[\tau]}\iota(\cP_i).
$
Then $r\in\mbb{A}$ and $\iota(\cP)\leq r$.
We claim that $r$ is a reduction of $\iota(\cP)$ to $\mbb{A}$.

Fix $0<d\leq r$ with $d\in\mbb{A}$.
Choose finite sets $F_i\subseteq Y_i$ so that membership in $d$ is determined by finitely many generators whose underlying sets are contained in the $F_i$'s.
We may assume $P\cap Y_i\subseteq F_i$ whenever this set is nonempty.
Choose a condition $\cQ\in\Cohen[X](\AgeK)$ with
$
\iota(\cQ)\leq d,
$
and strengthen it so that $\bigcup_{i\in[\tau]}F_i\subseteq Q$.
Let
$
\cQ_i=\cQ\rest[F_i].
$
As in the proof of \cref{Generated triangle algebra is not regular}, the element
$
u=\bigwedge_{i\in[\tau]}\iota(\cQ_i)
$
is nonzero and satisfies $u\leq d$.
The structures
\[
\cP,\cQ_0,\dots,\cQ_{\tau-1}
\]
are pairwise coherent on their intersections.
The structures $\cQ_i$ are pairwise coherent because they are restrictions of the same structure $\cQ$.
Also, since $d\leq r$, the structure $\cQ_i$ agrees with $\cP$ on
$P\cap F_i$.
By $(\tau+1)$-DAP there is a $\cR\in\AgeK$ containing
$\cP$ and every $\cQ_i$.
Hence
\[
0<\iota(\cR)\leq \iota(\cP)\wedge u\leq\iota(\cP)\wedge d.
\]
Thus every nonzero $d\leq r$ in $\mbb{A}$ is compatible with $\iota(\cP)$.
Therefore $r$ is a reduction, and $\mbb{A}$ is regular in $\mbb{B}$.
\end{proof}
The proof of the following lemma is easy.
\begin{lemma}
\label{Elementary submodels give the support club}
Suppose $\theta$ is sufficiently large and regular, and let
$M\prec H(\theta)$ be countable with
$
X,\Lang,\AgeK,\mbb{B}\in M.
$
Let $Y=M\cap X$.
Then
$
\mbb{B}\cap M=\mbb{B}_Y.
$
Consequently, the collection of algebras $\mbb{B}_Y$ contains a club in
$[\mbb{B}]^{\leq\w}$.
\end{lemma}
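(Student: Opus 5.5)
We prove the two inclusions $\mbb{B}_Y\subseteq\mbb{B}\cap M$ and $\mbb{B}\cap M\subseteq\mbb{B}_Y$ separately, and then derive the club statement from the standard fact that countable elementary submodels form a club.

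\emph{First inclusion.} Since $X,\Lang,\AgeK\in M$ and $\Lang$ is a finite relational language, every finite $\cP\in\Cohen[Y](\AgeK)$ belongs to $M$. Indeed, its universe is a finite subset of $Y=M\cap X\subseteq M$, and its finitely many relations are finite sets of tuples from $M$; both are definable from parameters in $M$. The canonical map $\iota$ is definable in $H(\theta)$ from $\mbb{B}$ and $\Cohen[X](\AgeK)$, so $\iota\in M$ and hence $\iota(\cP)\in M$. A finite Boolean combination of elements of $M$ is again in $M$, so every element of $\mbb{B}_Y$ lies in $\mbb{B}\cap M$.

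\emph{Second inclusion.} Fix $d\in\mbb{B}\cap M$. By definition of $\mbb{B}$, $d$ is a finite Boolean term in $\iota(\cP_0),\dots,\iota(\cP_{k-1})$ for some $\cP_j\in\Cohen[X](\AgeK)$. By elementarity, such a term together with the finitely many $\cP_j$ can be chosen inside $M$. Each $\cP_j\in M$ is finite, so its universe $P_j$ is a subset of $M$. This uses the standard fact that finite elements of a model $M\prec H(\theta)$ are subsets of $M$, which follows by listing the element as a finite sequence in $M$. Hence $P_j\subseteq M\cap X=Y$, so $\cP_j\in\Cohen[Y](\AgeK)$, and therefore $d\in\mbb{B}_Y$. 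This is the only point that needs care: we must ensure the witnesses to $d\in\mbb{B}$ can be taken in $M$. The existential statement ``there exist a finite tuple of conditions and a term evaluating to $d$'' holds in $H(\theta)$ with parameters in $M$, so a witness exists in $M$.

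\emph{Club statement.} The set of $M\cap\mbb{B}$, for countable $M\prec H(\theta)$ containing $X,\Lang,\AgeK,\mbb{B}$, contains a club in $[\mbb{B}]^{\leq\w}$. This is standard: the collection of such $M$ is closed under countable increasing unions, and any countable subset of $\mbb{B}$ is covered by some such $M$ via Löwenheim--Skolem. Since each $M\cap\mbb{B}=\mbb{B}_{M\cap X}$, this club consists of algebras of the form $\mbb{B}_Y$. Strictly, the traces form a stationary set containing a club, namely the closure points of a Skolem function for $H(\theta)$ restricted to $\mbb{B}$, which yields the club directly.
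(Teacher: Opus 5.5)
Your proof is correct. It is the standard elementarity argument that the paper omits as ``easy'': finite structures on subsets of $Y$, and their images under $\iota\in M$, lie in $M$. Conversely, for $d\in\mbb{B}\cap M$, elementarity places a finite witnessing tuple $\cP_0,\dots,\cP_{k-1}$ in $M$, and the universes of these structures are finite subsets of $M\cap X=Y$.

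For the club clause, your closing remark is the right justification: take the closure points of $e\mapsto\operatorname{Sk}(e\cup\{X,\Lang,\AgeK,\mbb{B}\})\cap\mbb{B}$ for finite $e\subseteq\mbb{B}$. Your first justification, that the models themselves are closed under increasing unions, does not by itself show that the family of traces $M\cap\mbb{B}$ is closed.
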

\begin{theorem}
\label{Higher DAP criterion for Cohen forcing}
Let $\AgeK$ be a non-trivial strong \Fraisse\ class in a finite relational language $\Lang$, and let $X$ be an infinite set.
Suppose that whenever $1\leq\tau<\w$ and
$
\w_\tau\leq|X|,
$
the class $\AgeK$ has $(\tau+1)$-DAP.
Then
$
\Cohen[X](\AgeK)\forcingisom\CantorFiniteSeq[X].
$
\end{theorem}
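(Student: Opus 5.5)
We plan to apply Milovich's characterization, Theorem \ref{Milovich characterization of Cohen algebras}, to the algebra $\mbb{B}$, which is forcing equivalent to $\Cohen[X](\AgeK)$. Theorem \ref{Milovich characterization of Cohen algebras} has two hypotheses. The first is that $\mbb{B}$ be $\pi$-homogeneous, and we also need to pin down $\pi(\mbb{B})$. The second is that there be a club of countable subalgebras all of whose finite unions of the relevant length generate regular subalgebras. Once both are verified, $\mbb{B}$ is Cohen. The final clause of Theorem \ref{Regular subalgebra definition of being Cohen}, or the standard fact that a Cohen algebra of uniform density $\kappa$ is $\RO(\CantorFiniteSeq[\kappa])$, then gives $\Cohen[X](\AgeK)\forcingisom\CantorFiniteSeq[X]$.

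\emph{Step 1: homogeneity.} Since $\AgeK$ is in a finite relational language, it has only countably many isomorphism types, so Lemma \ref{Uniform density of Cohen structure forcing} applies. It shows that $\RO(\Cohen[X](\AgeK))$, and hence its dense subalgebra $\mbb{B}$, has uniform density $|X|$. Uniform density means every nonzero relative algebra $\mbb{B}\rest b$ has the same $\pi$-weight, namely $|X|$. This is exactly $\pi$-homogeneity, and it gives $\pi(\mbb{B})=|X|$. Consequently the condition $\w_\tau\leq\pi(\mbb{B})$ in Theorem \ref{Milovich characterization of Cohen algebras} becomes $\w_\tau\leq|X|$, matching the hypothesis of the present theorem. (If $X$ is countable, $\mbb{B}$ is a countable atomless algebra and the conclusion is immediate; alternatively the argument below covers this case as well.)

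\emph{Step 2: the club.} By Lemma \ref{Elementary submodels give the support club}, there is a club $C\subseteq[\mbb{B}]^{\leq\w}$ all of whose members have the form $\mbb{B}_Y$ with $Y\in[X]^{\leq\w}$. Concretely, $C$ is the family of traces $\mbb{B}\cap M$ for countable $M\prec H(\theta)$, restricted to a club. Now fix $1\leq\tau<\w$ with $\w_\tau\leq|X|$, and take $\mbb{A}_0,\dots,\mbb{A}_{\tau-1}\in C$, say $\mbb{A}_i=\mbb{B}_{Y_i}$. By hypothesis $\AgeK$ has $(\tau+1)$-DAP. Lemma \ref{Higher DAP gives regular generated algebras} then shows that the subalgebra generated by $\bigcup_{i\in[\tau]}B_{Y_i}$ is regular in $\mbb{B}$. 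This is precisely the condition in Theorem \ref{Milovich characterization of Cohen algebras}, so $\mbb{B}$ is Cohen.

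\emph{Main obstacle.} The substantive work has already been done in Lemma \ref{Higher DAP gives regular generated algebras}. The delicate point here is bookkeeping: making sure that the club produced by Lemma \ref{Elementary submodels give the support club} really consists of algebras of the form $\mbb{B}_Y$. Formally, a club is contained in the family of such algebras after intersecting with the club of elementary-submodel traces. We must also check that Milovich's $\pi(\mbb{B})$ is computed for $\mbb{B}$ itself rather than for its completion; the two agree because $\mbb{B}$ is dense in $\RO(\Cohen[X](\AgeK))$. Finally, to identify the resulting Cohen algebra with $\RO(\CantorFiniteSeq[X])$ rather than $\RO(\CantorFiniteSeq[\kappa])$ for some other $\kappa$, we again use that the uniform density is $|X|$.
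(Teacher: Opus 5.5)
Your proposal is correct and follows essentially the same route as the paper. Uniform density (Lemma~\ref{Uniform density of Cohen structure forcing}) gives $\pi$-homogeneity with $\pi(\mathbb{B})=|X|$. The elementary-submodel club of algebras $\mathbb{B}_Y$, combined with Lemma~\ref{Higher DAP gives regular generated algebras}, verifies Milovich's criterion, and uniform density $|X|$ then identifies the resulting Cohen algebra with $\RO(\CantorFiniteSeq[X])$. Your observation that a finite relational language gives only countably many isomorphism types, which is what lets Lemma~\ref{Uniform density of Cohen structure forcing} apply, makes explicit a point the paper leaves implicit.
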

\begin{proof}
Let $\mbb{B}$ be as above.
By \cref{Uniform density of Cohen structure forcing}, the completion of the forcing has uniform density $|X|$.
Since $\mbb{B}$ is dense in this completion, $\mbb{B}$ is $\pi$-homogeneous and
$
\pi(\mbb{B})=|X|.
$
By \cref{Elementary submodels give the support club}, there is a club
$
C\subseteq[\mbb{B}]^{\leq\w}
$
whose elements are of the form $\mbb{B}_Y$ for countable $Y\subseteq X$.

Fix $1\leq\tau<\w$ with $\w_\tau\leq|X|$, and take
$\mbb{B}_{Y_0},\dots,\mbb{B}_{Y_{\tau-1}}\in C$.
By hypothesis $\AgeK$ has $(\tau+1)$-DAP, so
\cref{Higher DAP gives regular generated algebras} shows that the Boolean subalgebra generated by
$
\bigcup_{i\in[\tau]}B_{Y_i}
$
is regular in $\mbb{B}$.
Thus $C$ witnesses the condition in
\cref{Milovich characterization of Cohen algebras}.
It follows that $\mbb{B}$ is Cohen.
Since
$\mbb{B}\forcingisom\Cohen[X](\AgeK)$ and $\pi(\mbb{B})=|X|$, we obtain
$
\Cohen[X](\AgeK)\forcingisom\CantorFiniteSeq[X].
$
\end{proof}

\begin{corollary}
\label{All finite DAP gives Cohen on every set}
Suppose $\AgeK$ is as in \cref{Higher DAP criterion for Cohen forcing} and has
$n$-DAP for every $2\leq n<\w$.
Then, for every infinite set $X$,
$
\Cohen[X](\AgeK)\forcingisom\CantorFiniteSeq[X].
$
\end{corollary}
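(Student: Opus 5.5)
This corollary should follow directly from \cref{Higher DAP criterion for Cohen forcing}. The plan is to fix an arbitrary infinite set $X$ and check that the hypothesis of that theorem holds for $X$. That hypothesis asks for $(\tau+1)$-DAP whenever $1\leq\tau<\w$ and $\w_\tau\leq|X|$. For any such $\tau$ we have $2\leq\tau+1<\w$, so the assumption that $\AgeK$ has $n$-DAP for every $2\leq n<\w$ supplies $(\tau+1)$-DAP immediately. The theorem then gives $\Cohen[X](\AgeK)\forcingisom\CantorFiniteSeq[X]$, and since $X$ was arbitrary the corollary follows.

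The only points to verify are bookkeeping. First, the standing assumptions of \cref{Higher DAP criterion for Cohen forcing} must be in force: $\AgeK$ is a non-trivial strong \Fraisse\ class in a finite relational language. These are exactly the phrase ``$\AgeK$ is as in \cref{Higher DAP criterion for Cohen forcing}''. Second, the theorem's proof uses \cref{Uniform density of Cohen structure forcing}, which needs at most $|X|$ isomorphism types. A finite relational language has only countably many isomorphism types of finite structures, so this holds for every infinite $X$, including countable ones.

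The main step to double-check is the edge case where $|X|<\w_1$, i.e.\ $X$ countable. There the hypothesis of \cref{Higher DAP criterion for Cohen forcing} is vacuous. The theorem then reduces to the case of \cref{Milovich characterization of Cohen algebras} in which the only requirement is a club of regular countable subalgebras; this is supplied by \cref{Elementary submodels give the support club} together with \cref{Higher DAP gives regular generated algebras} for $\tau=1$, which uses $2$-DAP, i.e.\ strong amalgamation. The conclusion is then just that a countable atomless forcing is Cohen. No new argument is needed: the theorem as stated covers all infinite $X$, and I would simply cite it.
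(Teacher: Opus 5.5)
Your proposal is correct and is exactly the intended argument: the corollary is an immediate instance of Theorem~\ref{Higher DAP criterion for Cohen forcing}. For every $\tau\geq1$ the required $(\tau+1)$-DAP is among the assumed $n$-DAPs, and the finite language gives countably many isomorphism types. One small correction on the countable case: since $\pi(\mathbb{B})=|X|=\aleph_0$ there, no $\tau\geq1$ satisfies $\w_\tau\leq\aleph_0$, so the condition in Milovich's theorem as stated is vacuous and your appeal to $2$-DAP is unnecessary, though harmless.
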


\end{document}